\newcommand{\N}{{\mathbb N}}
\newcommand{\Z}{{\mathbb Z}}
\newcommand{\K}{\mathbf{k}}
\newcommand{\sd}{(S^*)^{-1}}
\newcommand{\acts}{\rhd}
\newcommand{\prho}{\overline{\rho}} 
\newtheorem{lemma}{Lemma}
\newtheorem{thm}{Theorem}
\newtheorem{defi}{Definition}
\theoremstyle{definition}
\newtheorem{ex}{Example}
\newcommand{\vai}{\rightarrow}
\newcommand{\id}{{\mathbf 1}}
\newcommand{\B}{\Lambda}
\newcommand{\C}{A}
\newcommand{\D}{\widehat{D}}
\newcommand{\gen}{B}
\newcommand{\Hom}{\operatorname{Hom}}
\newcommand{\End}{\operatorname{End}}
\newcommand{\um }{1_A}
\newcommand{\e}{E}
\newcommand{\f}{F}
\newcommand{\hits}{\rhd}
\title[Globalization theorems for partial (co)actions...]{Globalization theorems for partial Hopf (co)actions, and some of their applications}
\author[M.M.S. Alves]{Marcelo \ Muniz \ S. \ Alves}
\address{Departamento de Matem\'atica, Universidade Federal do Paran\'a, Brazil}
\email{marcelomsa@ufpr.br}
\author[E. Batista]{Eliezer \ Batista}
\address{Departamento de Matem\'atica, Universidade Federal de Santa Catarina, Brazil}
\email{ebatista@mtm.ufsc.br}
\thanks{\\ {\bf 2000 Mathematics Subject Classification}: Primary 16W30; Secondary 57T05, 16S40, 16S35.\\   {\bf Key words and phrases:} partial Hopf action, partial group action, partial smash product. }
\begin{document}

\begin{abstract}

Partial actions of Hopf algebras can be considered as a generalization of partial actions of groups on algebras. Among important properties of partial Hopf actions, it is possible to prove the existence of enveloping actions, i.e., every partial Hopf action on an algebra $A$ is induced by a Hopf action on an algebra $B$ that contains $A$ as a right ideal. This globalization theorem allows to extend several results from the theory of partial group actions to the Hopf algebraic setting. In this article, we prove a dual version of the globalization theorem: that every partial coaction of a Hopf algebra admits an enveloping coaction. We also show how this works on a series of examples which go beyond partial group actions. Finally, we explore some consequences of globalization theorems in order to present versions of the duality theorems of Cohen-Montgomery and Blattner-Montgomery for partial Hopf actions.
\end{abstract}

\maketitle

\section{Partial Actions and Coactions}
The notion of partial group actions is a quite well stablished algebraic concept by now. It was originated in the theory of operator algebras in order to classify certain C$^*$-algebras generated by partial isometries \cite{ruy}. After \cite{dok}, partial group actions received a purely algebraic formulation allowing several later developments, including Galois theory for partial group actions on rings \cite{paques}. Roughly speaking, a partial action of a group $G$ on a, not necessarily unital, $\K$-algebra $A$ is a family of ideals $\{D_g \}_{g\in G}$ together with a family of algebra isomorphisms $\{ \alpha_g :D_{g^{-1}}\rightarrow D_g |g\in G \}$ satisfying the following properties:
\begin{enumerate}[(i)]
\item $D_e =A$ and $\alpha_e =\mbox{Id}_A$, where $e$ is the identity of $G$.
\item $\alpha_g (D_{g^{-1}}\cap D_h )=D_g \cap D_{gh}$, for every $g,h\in G$.
\item $\alpha_h \circ \alpha_g (x) =\alpha_{hg}(x)$, for exery $x\in D_{g^{-1}}\cap D_{g^{-1} h^{-1}}$.
\end{enumerate}
A particularly important case of partial group action, from the purely algebraic point of view, is the case where $A$ is unital and all the ideals $D_g \trianglelefteq A$ are generated by central idempotents, thus making them unital ideals. In this case, it is possible to show that this partial action can be thought  as a restriction of a global action of the same group on a larger algebra. This is the globalization theorem as one can find in \cite{dok}. Basically, the globalization theorem states that given a partial action of a group $G$ on a unital algebra $A$ such that every ideal $D_g \trianglelefteq A$ is unital, it is possible to construct a new algebra $B$ (which is not necessarily unital), a group action $\beta :G\rightarrow \mbox{Aut}(B)$ and a monomorphism of algebras $\varphi :A\rightarrow B$ such that
\begin{enumerate}[\bf 1)]
\item $\varphi (A)\trianglelefteq B$.
\item $\varphi (D_g )=\varphi (A) \cap \beta_g (\varphi (A))$.
\item $\beta_g (\varphi (x))=\varphi (\alpha_g (x))$, for every $x\in D_{g^{-1}}$.
\item $B=\sum_{g\in G} \beta_g (\varphi (A))$.
\end{enumerate}
$B$ induces a partial $G$-action $\alpha^\prime = \left (\{D^\prime_g\}_{g \in G}, \{\alpha^\prime_g\}_{g \in G} \right)$ on $\varphi(A)$ by taking $D^\prime_g = \varphi (A) \cap \beta_g (\varphi (A))$ and $\alpha^\prime_g = \beta_g$ restricted to $D_{g^{-1}}^\prime$. Conditions (2) and (3) say that the partial actions on $A$ and $\varphi(A)$ are \emph{equivalent}, and conditions (1) and (4) say that the induced partial action $\alpha^\prime$ on $\varphi(A)$ is \emph{admissible}. In the same paper it is proved that,   under these assumptions, this globalization is unique up to isomorphism; it is also shown that every finite sum of ideals $\varphi (A) \cap \beta_g (\varphi (A))$ has a unity and hence $B$ is a unital algebra when $G$ is finite.

The main idea of partial Hopf actions is to generalize some results of partial group actions in the context of Hopf algebra theory. The concept of partial actions and coactions of Hopf algebras on algebras were introduced by Caenepeel and Janssen in 
\cite{caen06}, generalizing left $H$-module algebras and right $H$-comodule algebras respectively. Their principal motivation was to put the Galois theory for partial group actions on rings into a broader context, namely, the partial entwining structures. In searching for a consistent definition of partial entwining structure, the authors proposed what is meant to be a partial action of a Hopf algebra $H$ on an unital algebra $A$ (in what follows, every algebra is an algebra over a field $\K$).

\begin{defi} A (left) {\bf partial action} of the Hopf algebra $H$ on the 
algebra $A$ is a linear mapping $\alpha: H \otimes A \vai A$, denoted 
here by $\alpha(h \otimes a) = h \cdot a$, such that  
\begin{enumerate}[\bf 1)]
\item $h \cdot (ab) = \sum (h_{(1)} \cdot a) (h_{(2)} \cdot b),$
\item $1_H \cdot a = a,$
\item $h \cdot (g \cdot a) = \sum (h_{(1)} \cdot \um) ((h_{(2)} g) \cdot a).$
\end{enumerate}
\end{defi}
In this case, we call $A$ a (left) {\bf partial} $H$-{\bf module algebra}. We consider only left partial actions in this paper. 

\begin{defi} A morphism of partial $H$-module algebras is an algebra map 
$\theta: A \vai A'$ such that $\theta(h \cdot a) = h \cdot \theta (a)$ for all 
$h \in H$ and all $a \in A$. 
\end{defi}

Note that $A$ is not a $H$-module, nor does $H$ measure $A$. Such partial action can be obtained in the following manner: if $B$ is an $H$-module algebra that has a right ideal $A = \um B$, generated by an idempotent $\um$, such that $A$ is a unital algebra (with unity $\um$), then $A$ becomes a partial $H$-module algebra via the map 
\[
h \cdot a = \um (h \rhd a). 
\]
We call it the \emph{induced partial action} on $A$ and, in analogy to partial group actions, we will say that this partial action is \emph{admissible} if $B$ is equal to the $H$-submodule generated by $A$. As we have shown in \cite{paper}, all partial Hopf actions are essentially of this kind. In the particular case when the Hopf algebra is the group algebra $\K G$, for some group $G$, the partial Hopf action corresponds to the case of partial group actions where all the ideals $D_g$ are unital, and the action of $\K G$ on $A$ is defined as $g\cdot a=\alpha_g (a1_{g^{-1}})$.

Another important feature of partial group actions which extends to the Hopf algebraic context is the construction of partial crossed products, or partial skew group rings. Basically, given a partial action of a group $G$ on an algebra $A$, the partial skew group ring is the ring generated by finite sums as 
\[
A\rtimes G =\{ \sum_{g\in G} a_g \delta_g |\, a_g \in D_g \} ,
\]
and with the product defined by
\[
(a_g \delta_g )(b_h \delta_h )=\alpha_g (\alpha_{g^{-1}} (a_g )b_h )\delta_{gh} .
\]
Now, given a partial action of a Hopf algebra $H$ on $A$, one can make $A \otimes H$ to  become an associative, possibly non-unital algebra with the product 
\[
(a \otimes h) (b \otimes k) = \sum a(h_{(1)} \cdot b) \otimes h_{(2)} k 
\]
and the left ideal generated by the idempotent $e = \um \otimes 1_H$ is a unital algebra (with unity $e$), called the {\bf partial smash product} of $A$ by $H$, and denoted by $\underline{A \# H}$. As a vector space, it is generated  by elements of the form $\sum a(h_{(1)} \cdot \um) \# h_{(2)}$. In the particular case, when the Hopf algebra is the group algebra $\K G$, the partial smash product is precisely the partial skew group ring under the associated partial group action of $G$ on $A$.

Now, we are going to define the ingredients needed to state the globalization theorem for partial Hopf actions.

\begin{defi} An enveloping action, or globalization, for a partial $H$-module algebra $A$ is a pair $(B,\varphi)$, where $B$ is a  (not necessarily unital) $H$-module algebra and $\varphi: A \vai B$ is an algebra monomorphism such that:
\begin{enumerate}[\bf 1)]
\item $A^\prime = \varphi (A)$ is a right ideal of $B$. 
\item The map $\varphi$ is an isomorphism of partial $H$-module algebras between $A$ and $A'$ with the partial action induced from $B$, that is:
\begin{equation}\label{morfismo}
\varphi(h \cdot a) = \varphi(\um) (h \rhd \varphi(a))
\end{equation}
\end{enumerate}
\end{defi}
 
We remark that this definition of enveloping action is less restrictive than the one used in \cite{paper}. We also call  $B$ an enveloping $H$-module algebra of $A$.
 
\begin{thm}\cite{paper}\label{enveloping} Every partial $H$-module algebra $A$ has an enveloping action $(B,\varphi)$ such that the induced partial action on $\varphi(A)$ is admissible. 
\end{thm}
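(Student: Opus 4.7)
The plan is to embed $A$ into a natural $H$-module algebra of $A$-valued functions on $H$ and take the $H$-submodule of that generated by the image. Concretely, I would endow $M := \Hom_\K(H, A)$ with the convolution product $(fg)(h) = \sum f(h_{(1)}) g(h_{(2)})$ and the left $H$-action $(h \rhd f)(k) := f(kh)$; routine checks show $M$ is an (ordinary) $H$-module algebra. Define $\varphi: A \to M$ by $\varphi(a)(h) := h \cdot a$: axiom 1) gives $\varphi(ab) = \varphi(a)\varphi(b)$, while axiom 2) gives $\varphi(a)(1_H) = a$, so $\varphi$ is an injective algebra map. Set $B := H \rhd \varphi(A)$, the $H$-submodule of $M$ generated by $\varphi(A)$; admissibility of the induced partial action on $\varphi(A)$ is then built into the construction.

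The core of the proof is the identity
\[
\varphi(a) \cdot (h \rhd \varphi(c)) = \varphi\bigl(a\,(h \cdot c)\bigr),
\]
from which both the right-ideal property of $\varphi(A)$ in $B$ and the morphism condition \eqref{morfismo} (the special case $a = \um$) will follow. Unpacking the left-hand side at $k \in H$ gives $\sum (k_{(1)} \cdot a)(k_{(2)} h \cdot c)$, while expanding $k \cdot (a(h\cdot c))$ by axiom 1) followed by axiom 3) applied to $k_{(2)} \cdot (h\cdot c)$ produces $\sum (k_{(1)}\cdot a)(k_{(2)}\cdot\um)(k_{(3)} h\cdot c)$; regrouping the three-fold coproduct by coassociativity and then using $\sum (l_{(1)} \cdot a)(l_{(2)} \cdot \um) = l \cdot a$ (the $b = \um$ instance of axiom 1)) collapses this to $\sum(k_{(1)}\cdot a)(k_{(2)} h\cdot c)$, matching the other side.

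To see that $B$ is actually a subalgebra, I would then combine the antipode identity $h \otimes k = \sum h_{(1)} \otimes h_{(2)} S(h_{(3)}) k$ with the module-algebra axiom to derive
\[
(h \rhd \varphi(a)) \cdot (k \rhd \varphi(c)) = \sum h_{(1)} \rhd \bigl(\varphi(a) \cdot (S(h_{(2)}) k \rhd \varphi(c))\bigr),
\]
and then apply the core identity to the inner product, landing inside $H \rhd \varphi(A) = B$. I expect the main technical obstacle to be the Sweedler bookkeeping in the core identity: one must invoke coassociativity at precisely the right moment so that the dangling factor $k_{(2)} \cdot \um$ produced by axiom 3) can be absorbed by the $b = \um$ reduction of axiom 1). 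Once that step is in hand, the remaining verifications—injectivity of $\varphi$, compatibility with the partial action, and admissibility—are essentially formal.
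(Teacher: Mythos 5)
Your construction is correct and is essentially the paper's own proof: embedding $A$ into $\Hom_\K(H,A)$ with the convolution product and action $(h\rhd f)(k)=f(kh)$ via $\varphi(a)(k)=k\cdot a$, taking $B=H\rhd\varphi(A)$, and using the module-algebra identity $(h\rhd x)(k\rhd y)=\sum h_{(1)}\rhd\bigl(x((S(h_{(2)})k)\rhd y)\bigr)$ to see that $B$ is already a subalgebra, with your core identity $\varphi(a)(h\rhd\varphi(c))=\varphi(a(h\cdot c))$ yielding both the right-ideal property and condition \eqref{morfismo}. The Sweedler bookkeeping you describe (axiom 3 followed by the $b=\um$ instance of axiom 1) is exactly the verification carried out in the cited reference, so no gap remains.
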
 

The idea of the proof is to embedd $A$ into $\Hom_\K(H,A)$, which is an $H$-module algebra with the convolution product and the action given by $h \rhd f (k) = f(kh) $. Consider the linear map
 \begin{eqnarray}
 \varphi: A & \vai &  \Hom_\K(H,A) \nonumber \\
 a & \mapsto & \varphi(a): k \mapsto  k \cdot a \nonumber
 \end{eqnarray}
 
 It turns out that $\varphi$ is an algebra monomorphism that also satisfies equation (\ref{morfismo}), and that the submodule algebra $B$ generated by $\varphi(A)$ includes it as an ideal. 

It can be shown that $(B,\varphi)$ is not unique but it is {\it minimal}, in the sense that if $(B^\prime, \varphi^\prime)$ is another enveloping action, then there is a $H$-module algebra epimorphism from $B^\prime$ onto $B$; in the definition of this epimorphism we use the fact that $B = H \rhd \varphi(A)$, i.e., that the submodule generated by $\varphi(A)$ is already a subalgebra of $\Hom_\K(H,A)$. This follows from the equation
\begin{equation}\label{modulogerado}
(h \hits x)(k \hits y)  =   \sum h_{(1)} \hits (x ((S(h_{(2)})k) \hits y))
\end{equation}
which holds in every $H$-module algebra (see \cite{romenos}, Lemma 6.1.3).  

Several results from the classical theory of Hopf actions on algebras have been extended to the partial case \cite{galois,lomp}. In \cite{lomp}, a version of the duality theorem of Cohen and Montgomery for the partial smash product is presented. In this paper, we use Theorem \ref{enveloping} to push this result forward, proving a version of the Blattner-Montgomery theorem for finite-dimensional Hopf algebras.

Hopf algebras are well behaved mathematical objects, because of their good properties with relation to duality, and one can also consider {\bf  partial coactions} of Hopf algebras and partial $H$-comodule algebras as well. 

\begin{defi} \cite{caen06} A (right) {\bf partial $H$-comodule algebra} is defined as an algebra $A$ with a linear map $\prho: A \vai A \otimes H $ such that
\begin{enumerate}[\bf 1)]
\item  $\prho (ab) =\prho (a) \prho (b)$, $\forall a,b\in A$. 
\item  $(I\otimes \epsilon )\prho (a)=a$, $\forall a\in A$. 
\item  $(\prho \otimes I)\prho (a) =(\prho (\um) \otimes 1_H )
((I\otimes \Delta)\prho (a))$, $\forall a\in A$. 
\end{enumerate}
\end{defi}

In what follows, every (partial) $H$-comodule algebra is a right (partial) $H$-comodule algebra. 
In terms of the Sweedler notation, if we denote $\prho(a)$ by  
\[
\prho (a) =\sum a^{[0]} \otimes a^{[1]}
\]
then we may rewrite the conditions above as 

\begin{enumerate}
\item[1)] $\sum (ab)^{[0]} \otimes (ab)^{[1]} =
a^{[0]}b^{[0]} \otimes
a^{[1]}b^{[1]}$, $\forall a,b\in A$,
\item[2)] $\sum a^{[0]} \epsilon(a^{[1]}) =a$, $\forall a\in A$,
\item[3)] $\sum a^{[0][0]}\otimes 
a^{[0][1]} \otimes a^{[1]} = \sum \um ^{[0]}a^{[0]} \otimes 
\um ^{[1]}{a^{[1]}}_{(1)} \otimes {a^{[1]}}_{(2)}$, $\forall a\in A$.  
\end{enumerate}

\begin{defi} We say that an algebra map $\theta: A \vai A^\prime$ between two partial $H$-comodule algebras is a morphism (of  partial $H$-comodule algebras) if the following diagram is commutative:
\[
\xymatrix{
A \ar[d]_{\prho_A} \ar[r]^\theta & A^\prime \ar[d]^{\prho_{A^\prime}}\\
A \otimes H \ar[r]_{\theta \otimes I } & A^\prime \otimes H  
} 
\]
\end{defi}

One may obtain induced partial coactions in the following manner: if $(B,\rho)$ is an $H$-comodule algebra and $A$ is a right ideal with unity $\um$, then the map 
\begin{eqnarray}
\prho: A & \vai &  A \otimes H \nonumber \\
a & \mapsto &  (\um \otimes 1_H) \rho(a) \nonumber
\end{eqnarray}
induces a partial comodule structure on $A$. The same question arises: are all partial $H$-coactions of this kind? An affirmative answer was given in \cite{paper} for finite dimensional Hopf algebras, and here this result is extended to all Hopf algebras. First we explain briefly the elements to state correctly a globalization theorem for partial $H$ coactions.

\begin{defi} \label{envelopingcoaction} An {\bf enveloping coaction}, or globalization, for a partial $H$-comodule algebra $(A,\prho)$ is an $H$-module algebra $(B,\rho)$ and an algebra monomorphism  $\theta: A \vai B$ such that 
\begin{enumerate}[\bf 1)]
\item $\theta(A)$ is a unital right ideal of $B$, 
\item $B$ is generated by $\theta (A)$ as an $H$-comodule, 
\item The diagram below commutes  
\[
\xymatrix{
A \ar[d]_\prho \ar[r]^\theta & B \ar[d]^{(\theta(\um) \otimes 1_H) \rho} \\
A \otimes H \ar[r]_{\theta \otimes I } & B \otimes H  }
\]
i.e., $(\theta \otimes I )\prho(a) = ((\theta(\um) \otimes 1_H) \rho)  \theta(a)$ for each $a \in A$. In other words, $\theta: A \vai \theta(A)$ is a morphism of partial comodule algebras, where $\theta(A)$ has the induced coaction.  
\end{enumerate}
\end{defi}

As it is expected, if $H$ is finite-dimensional it is possible to go from partial $H$-comodules to partial $H$-modules and back, and this was proven by C. Lomp in a previous version of \cite{lomp}. This correspondence can be extended to a more general context. 

We recall that a {\bf pairing} between two Hopf algebras $H_1$ and $H_2$ is a a linear map
\[
\begin{array}{rccc}
\langle , \rangle : & H_1 \otimes H_2 & \rightarrow & \K\\
\, & h\otimes f & \mapsto & \langle h,f \rangle 
\end{array}
\]
such that
\begin{enumerate}
\item[(i)] $\langle hk,f \rangle=\langle h\otimes k,\Delta (f) \rangle$.
\item[(ii)] $\langle h,fg \rangle=\langle \Delta (h),f\otimes g \rangle$.
\item[(iii)] $\langle h,1_{H_2} \rangle =\epsilon (h)$.
\item[(iv)] $\langle 1_{H_1},f \rangle =\epsilon (f)$.
\end{enumerate}
A pairing is said to be {\it nondegenerate} if the following conditions hold:
\begin{enumerate}
\item If $\langle h,f \rangle=0$, for all $f\in H_2$ then $h=0$.
\item If $\langle h,f \rangle=0$, for all $h\in H_1$ then $f=0$. 
\end{enumerate}

Collecting results from \cite{paper} we have 
\begin{thm} \cite{paper}
Let  $\langle , \rangle :  H_1 \otimes H_2  \rightarrow  \K $ be a pairing between the Hopf algebras $H_1$ and $H_2$, and let $A$ be an algebra. 
\begin{enumerate}
\item If $A$ is a partial $H_1$-comodule algebra, then $A$ becomes a partial $H_2$-module algebra via the partial action
\[
   h \cdot a = \sum a^{[0]} \langle a^{[1]} ,h \rangle . 
\]
\item Conversely, if the pairing is \emph{nondegenerate}, suppose that $H_1$ acts
partially on an algebra $A$ in such a manner that $\mbox{dim}(H_1 \cdot a)<\infty$ for all $a \in A$. Then there is a partial $H_2$-coaction  $\prho :A\rightarrow A\otimes
H_2$ defined by
\[
(I\otimes h)\prho (a)= h\cdot a, \qquad \forall h\in H_1
\]
where $I\otimes h :A\otimes H_2 \rightarrow A$ is given by 
\[
(I\otimes h)(\sum_{i=1}^{n} a_i \otimes f_i)=\sum_{i=1}^{n} a_i \langle h,f_i
\rangle.
\]
\end{enumerate}
\end{thm}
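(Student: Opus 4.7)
The plan is to do a direct verification in each direction by using the pairing axioms to convert Sweedler-type identities for $\prho$ into identities for the action (and vice versa), with nondegeneracy of the pairing used to force equality in the coaction direction.

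For part (1), define $h \cdot a := \sum a^{[0]} \langle a^{[1]}, h \rangle$ and verify the three axioms of a partial $H_2$-module algebra directly. Multiplicativity follows by combining the multiplicativity of $\prho$ with the pairing identity $\langle xy, h \rangle = \sum \langle x, h_{(1)} \rangle \langle y, h_{(2)} \rangle$ (axiom (i) of the pairing), giving
\begin{align*}
h \cdot (ab) &= \sum a^{[0]} b^{[0]} \langle a^{[1]} b^{[1]}, h \rangle \\
&= \sum a^{[0]} \langle a^{[1]}, h_{(1)}\rangle \, b^{[0]} \langle b^{[1]}, h_{(2)} \rangle = \sum (h_{(1)} \cdot a)(h_{(2)} \cdot b).
\end{align*}
Normalization $1_{H_2} \cdot a = a$ is immediate from $\langle \cdot , 1_{H_2}\rangle = \epsilon$ and the counit axiom of $\prho$. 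For the partial-action axiom (3), I would expand $h \cdot (g \cdot a)$, apply the third partial-coaction axiom to rewrite $\sum a^{[0][0]}\otimes a^{[0][1]} \otimes a^{[1]}$ as $\sum \um^{[0]} a^{[0]} \otimes \um^{[1]} a^{[1]}_{(1)} \otimes a^{[1]}_{(2)}$, and contract with the pairing in the second and third tensorands to land on $\sum(h_{(1)}\cdot \um)((h_{(2)}g)\cdot a)$.

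For part (2), I would construct $\prho$ pointwise. Fix $a\in A$, pick a basis $a_1,\dots,a_n$ of the finite-dimensional space $H_1\cdot a$, and write $h\cdot a=\sum_i c_i(h)\,a_i$ with uniquely determined coefficient functionals $c_i \in H_1^*$. The key step is to identify each $c_i$ with an element $f_i \in H_2$ via the injection $H_2\hookrightarrow H_1^*$ coming from nondegeneracy. Once this is done, set $\prho(a):=\sum_i a_i\otimes f_i$; uniqueness of the $f_i$ gives basis-independence, and the defining equation $(I\otimes h)\prho(a)=h\cdot a$ holds by construction. The counit axiom, multiplicativity, and the third partial-coaction axiom are then verified by pairing both sides against an arbitrary $h$ (resp.\ $h\otimes g$) in $H_1$, reducing to identities already established in part (1), and then invoking nondegeneracy to upgrade these scalar equalities to equalities in $A\otimes H_2$ or $A\otimes H_2\otimes H_2$.

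The main obstacle is establishing that the coefficient functionals $c_i\in H_1^*$ actually lie in the image of $H_2$ inside $H_1^*$. This is where all three hypotheses must interact: local finiteness $\dim(H_1\cdot a)<\infty$ forces $h\mapsto h\cdot a$ to factor through a finite-dimensional quotient of $H_1$, the Hopf-pairing axioms combined with the partial-action identities constrain the $c_i$ to be representative-like functionals, and the nondegeneracy of the pairing then produces the required preimages $f_i\in H_2$. Once this existence is secured, the remainder of part (2) reduces to straightforward Sweedler-notation calculations mirroring those of part (1).
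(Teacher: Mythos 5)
Part (1) of your proposal is fine: the three axioms of a partial $H_2$-module algebra follow from the three partial-coaction axioms exactly as you indicate, using pairing axioms (i)--(iii) (for axiom (3) you in fact need both (i) and (ii), to split $\langle \um^{[1]}a^{[1]}_{(1)},h\rangle$ and to recombine $\langle a^{[1]}_{(1)},h_{(2)}\rangle\langle a^{[1]}_{(2)},g\rangle$ into $\langle a^{[1]},h_{(2)}g\rangle$, but your sketch clearly intends this). Note that the paper itself gives no proof of this theorem -- it is quoted from the reference on enveloping actions -- so there is no in-text argument to compare against; on part (1) your computation is the expected one.

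Part (2), however, has a genuine gap at its central step, and the mechanism you propose for closing it does not work. Writing $h\cdot a=\sum_i c_i(h)a_i$ with $c_i\in H_1^\ast$, the entire content of part (2) is the existence of $f_i\in H_2$ with $c_i=\langle\,\cdot\,,f_i\rangle$; you assert that ``the nondegeneracy of the pairing then produces the required preimages.'' Nondegeneracy only says that $f\mapsto\langle\,\cdot\,,f\rangle$ is \emph{injective} from $H_2$ into $H_1^\ast$; it gives no control whatsoever over the image. Even granting that the $c_i$ are representative functionals, i.e.\ lie in $H_1^\circ$ (which itself requires an argument for a \emph{partial} action, since $H_1\cdot a$ is not an $H_1$-submodule), the image of $H_2$ need not contain $H_1^\circ$. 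Concretely, over $\C$ take $H_1=\C[\Z]=\C[x,x^{-1}]$ and $H_2=\C\Gamma$ for an infinite proper subgroup $\Gamma\subset\C^\ast$, paired by $\langle x^n,\lambda\rangle=\lambda^n$; this pairing is nondegenerate, yet the global (hence partial, and locally finite) action of $\Z$ on $\C[y]/(y^2)$ determined by $x\cdot y=\mu y$ with $\mu\notin\Gamma$ admits no compatible $H_2$-coaction, by linear independence of characters. So the step cannot be recovered by the route you describe: what is really needed is a rationality-type hypothesis, namely that the coefficient functionals of the action come from $H_2$ -- exactly the extra condition the paper imposes on $B$ in Theorem \ref{globalcoaction}. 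Once existence (and uniqueness, hence linearity) of $\prho(a)$ is granted, the rest of your plan is sound: the coaction axioms do follow by pairing against arbitrary $h$, respectively $h\otimes g$, and using that nondegeneracy passes to the induced pairing of $H_1\otimes H_1$ with $H_2\otimes H_2$.
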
 			

If $H^\circ$ is the finite dual of the Hopf algebra $H$, then there is the canonical pairing $\langle , \rangle :  H^\circ \otimes H  \rightarrow  \K $ given by $\langle h^\ast, k\rangle = h^\ast(k)$. Therefore a partial $H$-comodule algebra $A$ has the corresponding partial $H^\circ$-module algebra structure given in item (1) above, and  using  Theorem \ref{enveloping} we obtain a global $H^{\circ}$-action on an algebra $B$. In order to go back to $H$-coactions, we need a little bit more. It is known that the canonical pairing of $H^\circ$  and $H$ is nondegenerate if and only if $H^\circ$ separates points; the problem is to assure that the $H^\circ$-module algebra $B$ is rational as $H^\circ$-module. If this is the case, which works at least when either $H$ or $B$ are finite dimensional, then $B$ is also a globalization of the partial $H$ coaction on $A$, as stated in the following theorem.

\begin{thm}\label{globalcoaction}\cite{paper} Let $H$ be a Hopf algebra such that its finite dual $H^\circ$
  separates points. Suppose that $H$ coacts partially on a unital algebra $A$
  with coaction $\prho$ and an 
enveloping action, $(B, \theta)$, of the partial action of $H^\circ$ on $A$ is a
rational left $H^\circ$-module. Then the pair $(B,\theta)$ is an enveloping coaction of $A$. 
\end{thm}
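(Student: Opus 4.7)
My approach is to transport the enveloping $H^\circ$-module algebra structure on $B$ back to an $H$-comodule algebra structure via rationality, and then translate each clause of Theorem \ref{enveloping} into the corresponding clause of Definition \ref{envelopingcoaction}; the bridge in every step is that $H^\circ$ separates points of $H$.

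First I would fix notation. The partial coaction $\prho$ on $A$ gives rise to the partial $H^\circ$-action $h^\ast \cdot a = \sum a^{[0]}\langle a^{[1]},h^\ast\rangle$, whose enveloping action $(B,\theta)$ is provided (and assumed rational as $H^\circ$-module). Rationality yields a coaction $\rho:B\to B\otimes H$ with $(I\otimes h^\ast)\rho(b)=h^\ast\cdot b$ for every $h^\ast\in H^\circ$ and every $b\in B$.

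Next I would check that $\rho$ is multiplicative, so that $(B,\rho)$ is a (not necessarily unital) $H$-comodule algebra. For $b,c\in B$ and $h^\ast\in H^\circ$,
\[
(I\otimes h^\ast)(\rho(b)\rho(c))=\sum (h^\ast_{(1)}\cdot b)(h^\ast_{(2)}\cdot c)=h^\ast\cdot(bc)=(I\otimes h^\ast)\rho(bc),
\]
where the first equality uses $\Delta_{H^\circ}$ being dual to the product in $H$, and the second uses that $B$ is a global $H^\circ$-module algebra. Writing $\rho(bc)-\rho(b)\rho(c)=\sum v_i\otimes h_i$ with the $v_i$ linearly independent in $B$, the vanishing of $(I\otimes h^\ast)$ on this difference forces $h^\ast(h_i)=0$ for all $h^\ast\in H^\circ$; separation of points then gives $h_i=0$, so $\rho(bc)=\rho(b)\rho(c)$.

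With this, I would verify the three defining conditions of an enveloping coaction. Condition \textbf{1} is inherited directly: $\theta(A)$ is a right ideal of $B$ with unit $\theta(\um)$ by construction of the enveloping action. For condition \textbf{3}, I would apply $I\otimes h^\ast$ to each side: the left-hand side becomes $\theta(h^\ast\cdot a)$, and the right-hand side becomes $\theta(\um)(h^\ast\hits\theta(a))$. These agree because $\theta$ is a morphism of partial $H^\circ$-module algebras, and separation of points promotes the pointwise agreement to an equality in $B\otimes H$, exactly as above. For condition \textbf{2}, admissibility of $(B,\theta)$ gives $B=H^\circ\hits\theta(A)$, i.e., $B$ is the $H^\circ$-submodule of itself generated by $\theta(A)$; since $B$ is rational, $H^\circ$-submodules coincide with $H$-subcomodules, so $B$ is likewise the $H$-subcomodule generated by $\theta(A)$.

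The main obstacle is conceptual rather than computational: one has to keep straight the interplay between the partial $H^\circ$-action on $A$, the global $H^\circ$-action on $B$, the induced partial action on $\theta(A)$, and the corresponding $H$-coaction counterparts. The only non-formal input, used three times in the same way, is that separation of points plus rationality upgrades an $H^\circ$-module equation into the desired $H$-comodule equation.
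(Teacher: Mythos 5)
Your argument is correct, and since the paper itself does not reprove Theorem \ref{globalcoaction} (it is quoted from \cite{paper}), the transport-of-structure proof you give is exactly the natural one: rationality produces $\rho:B\to B\otimes H$ with $(I\otimes h^\ast)\rho(b)=h^\ast\hits b$, and each comodule identity (multiplicativity of $\rho$, the compatibility $(\theta\otimes I)\prho(a)=(\theta(\um)\otimes 1_H)\rho(\theta(a))$, which is equation (\ref{morfismo}) read through the pairing) follows by testing against all $h^\ast\in H^\circ$, writing any discrepancy with linearly independent second tensor legs, and using that $H^\circ$ separates points. Two points deserve explicit mention. First, for condition (2) you invoke admissibility, $B=H^\circ\hits\theta(A)$, which is not literally contained in the hypothesis as restated in this paper; it is part of the more restrictive notion of enveloping action used in \cite{paper} (the paper remarks that its present definition is weaker), and it is genuinely needed -- with the weaker definition one could take $B=A\times A'$ with componentwise structure, where $\theta(A)=A\times 0$ satisfies conditions (1) and (3) but not (2) -- so your reading is the intended one, but it should be stated. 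Second, your assertion that $H^\circ$-submodules of a rational module coincide with $H$-subcomodules rests on the fact that separation of points is equivalent to density of $H^\circ$ in $H^\ast$ in the finite topology, so that functionals with $h_i^\ast(h_j)=\delta_{ij}$ can be chosen inside $H^\circ$; in fact for your purpose only the easy inclusion is required, since any subcomodule containing $\theta(A)$ is an $H^\circ$-submodule containing $\theta(A)$ and hence contains $H^\circ\hits\theta(A)=B$. For contrast, the paper's own new result (Theorem \ref{newglobalization}) deliberately avoids the rationality and separation hypotheses altogether by constructing the globalization inside $A\otimes H$ with the coaction $I\otimes\Delta$, and only afterwards compares it, via $\Psi$, with the $\Hom_\K(H^\circ,A)$ construction that your argument globalizes.
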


In what follows, we prove that there is an enveloping coaction for {\it any} partial $H$-comodule algebra $A$, for any Hopf algebra $H$. We also show that this new construction coincides with the construction given in Theorem \ref{globalcoaction}  when the hypotheses are satisfied. 

\section{Globalization of partial coactions}

Our aim in this section is, given a partial coaction of a Hopf algebra $H$ on a unital algebra $A$, to construct an $H$-comodule algebra $B$ which contains $A$ as a right ideal and such that the partial coaction coincides with the induced coaction obtained from $B$.

Recall that if $M$ is a right $H$-comodule,  then $M$ is a left $H^\ast$-module  via the action 
$h^\ast \rightharpoonup m = (I \otimes h^\ast) \rho(m) = \sum m^{(0)} h^\ast(m^{(1)}) $, and its $H$-subcomodules coincide with its \emph{rational} $H^\ast$-submodules. 

\begin{lemma}\label{lema} Let $\B$ be an $H$-comodule algebra. If $\C \subset \B$ is a subalgebra, the subcomodule algebra $\gen$ generated by $\C$ is the subalgebra generated by $H^\ast \rightharpoonup A$. In other words, the set 
\[
S = \{ h^\ast \rightharpoonup a; a \in A, h^\ast \in H^\ast\}
\]
generates $\gen$ as an algebra. 
\end{lemma}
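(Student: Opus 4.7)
My plan is to show the two inclusions separately; let $T \subseteq \B$ denote the subalgebra generated by $S$. The easy direction is $T \subseteq \gen$: since $\gen$ is an $H$-subcomodule of $\B$, it is stable under the induced $H^*$-action, and as $\C \subseteq \gen$ one has $h^* \rightharpoonup a \in \gen$ for every $a \in \C$ and $h^* \in H^*$. Thus $S \subseteq \gen$, and being a subalgebra, $\gen$ contains $T$.

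For the reverse inclusion I will show that $T$ is itself an $H$-subcomodule algebra of $\B$ containing $\C$, so that $\gen \subseteq T$ by minimality of $\gen$. Containment of $\C$ follows from $a = \epsilon \rightharpoonup a \in S$, and $T$ is an algebra by construction, so the real task is $\rho(T) \subseteq T \otimes H$. Here I use the fact recalled just before the lemma: an $H^*$-submodule of the rational $H^*$-module $\B$ is automatically a subcomodule. Indeed, if $\rho(t) = \sum_{i=1}^{n} t_i \otimes h_i$ with the $h_i$'s linearly independent and $h_j^* \in H^*$ chosen so that $h_j^*(h_i) = \delta_{ij}$, then $t_j = h_j^* \rightharpoonup t$ lies in $T$ as soon as $T$ is $H^*$-stable, giving $\rho(t) \in T \otimes H$. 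So it suffices to verify that $H^* \rightharpoonup T \subseteq T$.

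To check $H^*$-stability of $T$, I use the identity $\rho(xy) = \rho(x)\rho(y)$ valid in the comodule algebra $\B$, which gives
\[
h^* \rightharpoonup (xy) = \sum x^{(0)} y^{(0)} h^*(x^{(1)} y^{(1)}).
\]
The same dual-basis trick shows each component $x^{(0)}$ appearing in $\rho(x)$ is of the form $k^* \rightharpoonup x$ for some $k^* \in H^*$. Consequently, if $x = f^* \rightharpoonup a$ lies in $S$, each $x^{(0)} = (k^* f^*) \rightharpoonup a$ also lies in $S$, and similarly for $y^{(0)}$, so the displayed sum lies in $T$. An induction on the number of factors, applying the dual-basis trick to $\rho(s_1 \cdots s_k)$ and invoking the inductive hypothesis on shorter products, extends this to $h^* \rightharpoonup (s_1 \cdots s_n) \in T$ for every $s_1, \ldots, s_n \in S$. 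The delicate point throughout is that dual-basis trick which isolates the individual comodule components and lets one reduce back to the $H^*$-action; once it is in hand, both inclusions follow cleanly and $\gen = T$.
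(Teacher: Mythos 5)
Your proof is correct and follows essentially the same route as the paper's: both arguments rest on the dual-basis trick (writing $\rho(x)=\sum_i x_i\otimes h_i$ with the $h_i$ independent so that $x_i=h_i^*\rightharpoonup x$), an induction on the length of monomials in $S$ using multiplicativity of $\rho$, and minimality of $\gen$. The only difference is organizational: the paper first identifies each $H^*\rightharpoonup a$ with a finite-dimensional subcomodule $V_a$ and checks $\rho(\text{monomial})\in \gen\otimes H$ directly, whereas you prove $H^*$-stability of the generated subalgebra and then convert stability into the subcomodule property by the same trick, which amounts to the same argument.
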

\begin{proof}
We claim that $H^\ast \rightharpoonup \C$ is a rational $H^\ast$-submodule of $\B$, and hence, that it is an $H$-subcomodule containing $\C$. In fact, it is enough to check that for each $a \in \C$, the cyclic submodule $H^\ast \rightharpoonup a$ generated by $a$ is rational, because
\[H^\ast \rightharpoonup \C = \sum_{a \in \C} H^\ast \rightharpoonup a.\]
Given $a \in \C$, write $\rho(a)$ in the form $\rho(a) = \sum_{i=1}^n a_i \otimes h_i$, where $\{h_1, \ldots, h_n\}$ is a linearly independent subset of $H$. The proof of the fundamental theorem of comodules \cite{romenos} assures that the vector subspace $V_a = \mbox{span}\{a_1, \ldots, a_n \}$ is an $H$-subcomodule, and hence an $H^\ast$-submodule, which contains $a$. Therefore $V_a$ contains $H^\ast \rightharpoonup a$ and this last module is rational. 

Going further, if $W$ is an $H^\ast$-submodule containing $a$, since the elements $h_1, \ldots, h_n \in H$ are linearly independent, one can take functionals $h^\ast_1 , \ldots h^\ast_n \in H^\ast$ such that $h^\ast_i (h_j)=\delta_{ij}$. Hence, each $a_i$ can be written as 
$h^\ast_i \rightharpoonup a $ and this implies that $V_a \subset W$, and therefore $H^\ast \rightharpoonup a  = V_a$. 

Consider now the subalgebra  $\gen$  generated by $H^\ast \rightharpoonup \C$. This algebra may also be generated, as a vector space, by monomials of the form $b_1b_2 \cdots b_n$, where each $b_i$ lies in some $V_a = H^\ast \rightharpoonup a$ and $n \in \N$. 
We claim that $\gen$ is also an $H$-subcomodule of $\B$. 

In fact, if $b \in V_a$ then $\rho(b) \in V_a \otimes H \subset \gen \otimes H$ (since each $V_a$ is a subcomodule of $H^\ast \rightharpoonup A$). Assuming that $\rho(v) \in \gen \otimes H$ for every monomial $v$  of length up to $n-1$, consider now the monomial $b_1b_2 \cdots b_n$. By induction, 
\[\rho(b_2 \cdots b_n) = \sum_{j=1}^m c_j \otimes g_j \in \gen \otimes H \]
 and, since $b_1 \in V_a$ for some $a \in \C$, we conclude that  $\rho(b_1b_2 \cdots b_n) = \rho(b_1) \rho (b_2 \cdots b_n)$ lies in $\gen \otimes H $. 

On the other hand, if $M$ is a subcomodule algebra containing $\C$, then each $V_a$ lies in $M$: in fact, $M$ is an $H^\ast$-module and, if $a \in \C$ then  $h^\ast \cdot a = \sum a^{(0)} h^* (a^{(1)}) \in M $ for every  $h^* \in H^*$. And since $M$ is a subalgebra and $H^\ast \rightharpoonup \C \subset M$, then the subalgebra $\gen$ generated by $H^\ast \rightharpoonup \C$ is contained in $M$. Therefore, $\gen$ is the subcomodule algebra generated by $\C$.
\end{proof}

We remark that when $H^\circ$ separates points, by the Jacobson density theorem, for each $a \in \C$ and $h^\ast \in H^\ast$ there is another functional $k^\ast \in H^\circ$ such that $ h^\ast \rightharpoonup a  = \sum a^{(0)} h^* (a^{(1)})  = k^\ast \rightharpoonup a$. Hence, we can choose to work with $H^\circ \rightharpoonup \C$, instead of $H^\ast \rightharpoonup \C$. In this case, the equation (\ref{modulogerado}) from the former section shows that $H^\circ \rightharpoonup \C$ is already a subcomodule algebra, and hence $\gen = H^\circ \rightharpoonup \C$. 

We can provide now a simple proof of the fact that every partial coaction is induced, thus extending above mentioned Theorem \ref{globalcoaction}. 

 \begin{thm}\label{newglobalization}
 Every partial $H$-comodule algebra $(A, \prho)$ has a globalization $(B,\rho)$. 
 \end{thm}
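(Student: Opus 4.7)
The plan is to mimic the construction of Theorem \ref{enveloping} in the dual setting. The natural host $H$-comodule algebra here is $A\otimes H$ with tensor product multiplication and coaction $I\otimes \Delta$. I would define $\theta\colon A\to A\otimes H$ by $\theta = \prho$: axiom (1) of partial coactions makes $\theta$ multiplicative, and axiom (2) gives $(I\otimes \epsilon)\circ\theta = \mbox{id}_A$, so $\theta$ is an injective algebra map. Let $B$ be the $H$-subcomodule algebra of $A\otimes H$ generated by $\theta(A)$; by Lemma \ref{lema}, $B$ is the subalgebra generated by $H^\ast\rightharpoonup \theta(A)$.

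Verifying that the diagram in Definition \ref{envelopingcoaction}(3) commutes is essentially a restatement of axiom (3) of partial coactions:
\begin{equation*}
(\theta(\um)\otimes 1_H)\rho(\theta(a)) = (\prho(\um)\otimes 1_H)(I\otimes \Delta)\prho(a) = (\prho\otimes I)\prho(a) = (\theta\otimes I)\prho(a),
\end{equation*}
where the central equality is exactly that axiom. What remains, and constitutes the main obstacle, is showing that $\theta(A)$ is a right ideal of $B$.

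For this, the key observation is that $\theta(\um)$ is a two-sided unit on $\theta(A)$ (since $\theta(a)\theta(\um) = \theta(a\um) = \theta(a)$), so for any $a\in A$ and $b\in B$ one has $\theta(a)b = \theta(a)\theta(\um)b$, reducing the problem to proving $\theta(\um)B\subseteq \theta(A)$. A direct application of axiom (3) identifies $\theta(\um)(h^\ast\rightharpoonup \theta(a))$ with $\theta\!\left(\sum a^{[0]}h^\ast(a^{[1]})\right)$, giving the result on the generators of $B$; a straightforward induction on monomial length in those generators, recycling $\theta(c)\theta(\um)=\theta(c)$ at each step, extends the claim to all of $B$. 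I expect no serious conceptual difficulty once the host $A\otimes H$ is chosen: axiom (3) does all the heavy lifting, in close parallel with how it controls the induced coaction formula, and the equivalence with Theorem \ref{globalcoaction} in the rational setting should follow by identifying the two subcomodule algebras both as the $H^\ast$-orbit of $\theta(A)$.
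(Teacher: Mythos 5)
Your proposal is correct and follows essentially the same route as the paper: take $A\otimes H$ with coaction $I\otimes\Delta$, embed $A$ via $\prho$ itself, generate $B$ as in Lemma \ref{lema}, read off the commuting diagram from axiom (3), and prove $\prho(\um)B\subseteq\prho(A)$ first on the generators $h^\ast\rightharpoonup\prho(a)$ via axiom (3) and then by induction on products, exactly as in the paper's proof (including the concluding comparison with the construction of Theorem \ref{globalcoaction}).
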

\begin{proof}
Let $A$ be a partial $H$-comodule algebra via the map $\prho: A \vai A \otimes H$. Consider on $A \otimes H$ the trivial comodule structure $\delta = I \otimes \Delta$, and let $B$ be the subcomodule algebra of $A \otimes H$ generated by $\prho(A)$. Note that $\prho(\um)$ is the unity of $\prho(A)$ and $\prho (A)\subseteq B$. We claim that the pair $(B, \prho )$ is an enveloping coaction for the partial $H$-comodule algebra $A$. First, item (2) of definition \ref{envelopingcoaction} is, by construction,  automatically satisfied by $B$. Rewriting the partial comodule law 
\[
(\prho(\um) \otimes I)(I \otimes \Delta) \prho =  (\prho \otimes I) \prho
\] as 
 \[(\prho(\um) \otimes I) \delta \prho =  (\prho \otimes I) \prho ,\] 
 we see that the following diagram comutes 
\[
\xymatrix{
A \ar[d]_\prho \ar[r]^\prho & B \ar[d]^{(\prho(\um) \otimes 1_H) \delta} \\
A \otimes H \ar[r]_{\prho \otimes I } & B \otimes H  }
\]
This means that $\prho$ works simultaneously as the partial comodule structure of $A$ and as the monomorphism $\theta: A \rightarrow B$ of definition \ref{envelopingcoaction}, and therefore item (3) of the same definition is satisfied (note also that the definition of partial $H$-comodule algebra implies that $\prho$ is, indeed, an algebra monomorphism).  
  
Now it only remains to verify that  item (1) of the same definition of enveloping coaction works for the pair $(B, \prho)$. Proving that 
$\prho(\um ) B \subseteq \prho (A)$ will be sufficient to verify that $\prho (A)$ is a right ideal of $B$ with unit $\prho (\um )$. In fact, if $a \in A$ and $b\in B$ then $\prho (a)b=\prho (a) \prho (\um )b\in \prho (A)$. 

Let us verify first for the generators of $B$. As we have seen earlier in Lemma 
\ref{lema}, the set 
\[
S = \left\{\sum a^{[0]} \otimes a^{[1]}_{(1)} h^* (a^{[1]}_{(2)}); a \in A, h^\ast \in H^\ast\right\}
\] 
generates $B$ as an algebra; multiplying such an element by $\prho(\um ) $ on the left we get
\begin{eqnarray}
\label{inclusao}
\prho (\um )\sum a^{[0]} \otimes a^{[1]}_{(1)} h^* (a^{[1]}_{(2)}) &=& \sum \um^{[0]}a^{[0]} \otimes \um^{[1]}a^{[1]}_{(1)} h^* (a^{[1]}_{(2)})  = \nonumber\\ 
&=& \sum a^{[0][0]} \otimes a^{[0][1]} h^* (a^{[1]}) =\nonumber\\ 
&=& \prho(\sum a^{[0]}h^* (a^{[1]})) .
\end{eqnarray}
Recalling that, if $a\in A$ and $h^\ast \in H^\ast$ then 
\[
\sum a^{[0]} h^\ast(a^{[1]}) = (I \otimes h^\ast) \prho(a)
\]
lies in $A$ (since $A$ is a partial $H$-comodule algebra), one concludes that $\prho(\um)b \in \prho(A)$ for every $b \in S$. Suppose now that $b_1,b_2, \ldots, b_n \in S$ and suppose, by induction, that $\prho (\um )b_1\ldots b_{n-1} =\prho (a) \in \prho (A)$. Then
\[
\prho(\um) b_1 \cdots b_n =\prho (a)b_n=\prho (a)
\prho (\um )b_n=\prho (a) \prho (a')=\prho (aa').
\]
It follows that $\prho(\um) b_1 \cdots b_n \in \prho(A)$, for any product of generators of $B$, and hence that $\prho (\um )B \subseteq \prho (A)$. Therefore, the pair $(B,\prho)$ is an enveloping coaction for the partial $H$-comodule algebra $A$.
\end{proof}

When $H^\circ$ separates points, this construction is essentially the same as the previous construction in \cite{paper}. In fact, it is a routine verification that the map
\begin{eqnarray}
\Psi : A \otimes H & \rightarrow & \Hom_\K (H^\circ, A) \nonumber \\
a \otimes h & \mapsto & a \otimes h : k^\ast \mapsto ak^\ast(h) \nonumber
\end{eqnarray}
is a monomorphism of $H^\circ$-module algebras, where the action of $H^{\circ}$ on $A\otimes H$ is given by $h^\ast \rhd (a\otimes h)=\sum a\otimes h_{(1)} h^\ast 
(h_{(2)})$. The composition of $\Psi$ with $\prho$ provides the map 
$\varphi = \Psi \circ \prho : A   \rightarrow  \Hom_\K (H^\circ, A) $
where $\varphi (a) (k^\ast) = \sum a^{[0]}k^\ast(a^{[1]}) = k^\ast \cdot a$. This is exactly the map $\varphi: A \vai \Hom_\K(H^\circ, A)$ which implements the minimal enveloping action of the partial $H^\circ$-module algebra $A$. It remains to prove that the $H^\circ $-module algebra $B'=H^\circ\rhd \varphi (A)$ is isomorphic to the $H$-comodule algebra $B$ obtained in Theorem \ref{newglobalization}. Since $\Psi$ is a monomorphism of $H^\circ$-module algebras, it is sufficient to verify on the generators:
\begin{eqnarray}
\Psi (\sum a^{[0]} \otimes {a^{[1]}}_{(1)} h^\ast ( {a^{[1]}}_{(2)})) &=&
\Psi (h^\ast \rhd (\sum a^{[0]} \otimes a^{[1]} ))= \nonumber\\
&=& h^\ast \rhd \Psi (\sum a^{[0]} \otimes a^{[1]} )=\nonumber\\
&=& h^\ast \rhd \Psi(\prho (a))=h^\ast \rhd \varphi (a) .\nonumber
\end{eqnarray}
From these equalities, it follows that $\Psi (B) =H^\circ \rhd \varphi (A)=B'$.

\bigskip

We present now some examples of partial coactions and their globalizations. 

\begin{ex}
 This is a variation on a  example from \cite{galois}. Let $G$ be a finite group. If $N$ is a normal subgroup of $G$ with $\mbox{char}(k)\nmid |N|$, then $e_N = \dfrac{1}{|N|} \sum_{n \in N} n$  is a central idempotent in $\K G$. Let $A = e_N \K G$ be the ideal generated by $e_N$. Consider the partial $\K G$-coaction induced on $A$ by $\Delta: \K G \vai \K G \otimes \K G$, i.e., 
\[
\prho(e_N g) = (e_N \otimes 1) \Delta(e_N g) = e_N g \otimes e_N g = \dfrac{1}{|N|^2} \sum_{m,n \in N} mg \otimes ng .
\]
From this coaction, one can see that the associated partial action of $\K G^\ast$ on $A$ is given by
\[
p_h \cdot (e_N g)=[h\in Ng] \dfrac{1}{|N|} e_N g ,
\]
where the expression $[h\in Ng]$ is the Boolean value of this sentence, that is, it is equal to $0$ if $h\notin Ng$ and equal to $1$ if $h\in Ng$.
In order to construct the globalization comodule algebra, we need to characterize its set of generators. Since 
\[
p_h \rightharpoonup \prho(e_N g) = p_h \rightharpoonup (\dfrac{1}{|N|^2} \sum_{m,n \in N} mg \otimes ng)   =  
 \dfrac{1}{|N|^2} \sum_{m,n \in N} mg \otimes ng p_h(ng)  
\]
then, we get 
\[
p_h \rightharpoonup \prho(e_N g) = [h\in Ng ] \dfrac{1}{|N|} e_N g \otimes h.  
\]

Hence, the subcomodule $C_{e_N g}$ generated by $\prho(e_N g)$ is the subspace $C_{e_N g} = \langle e_N g \otimes h | h \in Ng \rangle$. The sum of the subcomodules $C_{e_N g}$ is the subcomodule algebra $B = \langle e_N g \otimes g | g \in G \rangle$, which is isomorphic to $\K G$, as a $\K G$-comodule algebra, via the algebra monomorphism 
 \begin{eqnarray}
 \Phi : \K G & \vai & A \otimes \K G  \nonumber \\
   v & \mapsto &(e_N \otimes 1) \Delta (v) \nonumber 
 \end{eqnarray}
\end{ex}

\begin{ex}
This example of partial coaction comes from \cite{caen06}. Let $H_4$ be the Sweedler algebra $H_4 = \langle 1, c, x, cx \mid c^2=1, x^2 = 0, xc = - cx \rangle $, with Hopf algebra structure given by $\Delta c=c\otimes c$, $\Delta x =x\otimes 1+c\otimes x$, $\epsilon (c)=1$, $\epsilon (x)=0$, $S(c)=c$ and $S(x)=-x$. For any $\alpha \in \K$, the element $f = \frac{1}{2}(1 + c + \alpha cx)$ is an idempotent and, identifying $\K \otimes H_4$ with $H_4$ in the canonical way, the map 
\begin{eqnarray}
\prho: \K & \vai & H_4 \nonumber \\
\lambda  & \mapsto & \lambda f \nonumber 
\end{eqnarray}
defines a structure of partial $H_4$-comodule algebra on $\K$. Now,  
\[
\Delta\prho(\K) = \K (1 \otimes 1 + c \otimes c  + \alpha cx \otimes c + \alpha 1 \otimes cx)
\]
and applying functionals to elements of $\Delta\prho(\K)$, we get the subcomodule 
\[ B = (H_4)^\ast \rightharpoonup \K =  \langle 1, (c + \alpha cx)/2 \rangle = \langle 1, f\rangle\] 
which is also a subalgebra of $H_4$. This is the globalization of the partial coaction $\prho$. 
\end{ex}

\begin{ex}
Consider once more the Sweedler Hopf algebra  $H_4$, and let $A$ be the subalgebra $\K[x]$ of $H_4$.  In \cite{caen06}, it is shown that $A$ is a partial $H_4$-comodule algebra with the coaction
\begin{eqnarray}
\prho(1) & = & \dfrac{1}{2} \left( 1 \otimes 1 + 1 \otimes c + 1 \otimes cx \right) \nonumber \\
\prho(x) & = & (x \otimes 1)\prho(1) =  \dfrac{1}{2} \left( x \otimes 1 + x \otimes c + x \otimes cx \right) \nonumber 
\end{eqnarray}
The subcomodule algebra $B$ of $A \otimes H_4$ generated by $\prho(A)$ has basis 
\[
\beta = \{ 1 \otimes 1,  1 \otimes c + 1 \otimes cx, x \otimes 1,  x \otimes c + x \otimes cx\}
\] 
Denoting $1_B = 1 \otimes 1$, $g = 1 \otimes c + 1 \otimes cx$, and $y =x \otimes 1$, we have that $e= \dfrac{1}{2}(1+g)$ is a central idempotent of $B$ and that $\prho(A)$ is the ideal $eB =\langle e, ey \rangle$; using the same notations, 
$B$ can be described as 
\[
B = \langle 1_B, g, y, gy \mid g^2 = 1_B, y^2 =0, gy=yg \rangle
\]
and from this description one can check that $B$ is isomorphic (as an algebra) to the tensor algebra $\K \Z_2 \otimes  \K[Y]/(Y^2)$ (interestingly enough, although we started in $H_4$, the globalization does not lead back to  it). The global coaction given by $\rho = I \otimes \Delta$ on $B$ is  
\[
\rho(1_B) = 1_B \otimes 1, \ \ \rho (g) = g \otimes c + 1_B \otimes cx, \ \ \rho(y) = y \otimes 1.
\]  
and the partial coaction that it induces on $\prho(A)$ is 
\begin{eqnarray}
(e \otimes 1)\rho(e) & = &  1/2(e \otimes 1 + e \otimes c + e \otimes cx) \nonumber \\
(e \otimes 1)\rho(ey) & = & 1/2(ey \otimes 1 + ey \otimes c + ey \otimes cx) \nonumber \end{eqnarray}

Clearly, $\prho: A \vai B$ is an algebra monomorphism that intertwines the partial coaction on $A$ and the induced partial coaction on $B$.
\end{ex}

\section{Duality for Partial Actions}

Classical duality theorems have their origins in the context of operator algebras, in works of Takesaki and colaborators for describing the duality between actions and coactions of locally compact groups on Von Neumann algebras \cite{NT}. Later, this duality for actions and coactions of groups on algebras was considered by Cohen and Montgomery \cite{CM}. Basically, given an algebra $A$ with a left action of $\K G$ on it, there is a natural left action of the dual $\K G^\ast$ on the smash product $A\# \K G$. The Cohen-Montgomery duality theorem states, for $G$ finite, that $(A\#\K G)\# \K G^\ast \simeq A\otimes M_n (\K)$, where $n=|G|$. This kind of result is important since coactions of group algebras correspond to group gradings on algebras.

The extension of this duality theorem to the context of Hopf algebras was made in the work of Blattner and Montgomery \cite{BM}. This theorem states that, given a Hopf algebra $H$, such that its finite dual, the Hopf algebra $H^\circ$, separates points, and a left $H$-module algebra $A$, then the double smash product $(A\# H)\# H^\circ $ is isomorphic to $A\otimes \mathcal{L}$, where $\mathcal{L}$ is a dense subalgebra of $\End_\K(H)$. In the case of a finite dimensional Hopf algebra $H$, this results simplifies to $(A\# H)\# H^\ast \simeq A\otimes \End_\K (H)$.

A new version of the Cohen-Montgomery theorem for the case of partial group actions was proposed by Lomp in \cite{lomp}. Basically, the author obtained, for a finite group $G$ with $|G|=n$, an algebra morphism $\Phi$ from the smash product $(A\rtimes G)\#\K G^\ast$ into the matrix algebra $M_n(A)$ and, as a consequence, a decomposition of this smash product as the direct product of algebras
\[
(A\rtimes G)\#\K G^\ast \simeq \ker \Phi \times e M_n(A) e
\]
where $e = \sum_{g \in G} (g^{-1} \cdot \um) E_{g,g} = \Phi(\um \# 1 \# \epsilon)$. In a previous version of \cite{lomp}, attempting to obtain a version of the Blattner-Montgomery theorem for partial actions of finite dimensional Hopf algebras, the author constructed an algebra morphism from $(\underline{A\# H})\# H^\ast $ into $A\otimes \End_\K (H)$. As we shall see in this section, the globalization theorem for partial Hopf actions explained earlier helps us to obtain the partial versions of both classical duality theorems in a more direct way. We also prove a second version of the Blattner-Montgomery theorem: if $B$ is a globalization of the partial $H$-module algebra $A$, then there is a right $B$-module $M$ such that $(\underline{A \# H})\# H^\ast$ is isomorphic to $\End_B(M)$.  When $H = \K G$, we prove also that $(\underline{A \# \K G} \# \K G^*$ is isomorphic to a matrix ring. 

In what follows, we consider a finite-dimensional Hopf algebra $H$ acting partially on a unital algebra $A$. We have to construct a  $H$-module algebra $B$, with unity $1_B$, having $A$ as a unital ideal such that the partial action on $A$ can be viewed as the partial action induced from $B$. Consider a globalization $B$ for the partial action of $H$ on $A$; without loss of generality, we can consider $A$ as an ideal of $B$. If $B$ is a unital algebra, then we take this admissible globalization; if $B$ doesn't have a unity, we take the unitization $\widetilde{B}=\K\times B$ with the $H$-module structure given by $h\rhd (\lambda ,a)=(\epsilon (h)\lambda ,h\rhd a)$. It is easy to see that $\widetilde{B}$ is an $H$-module algebra. Moreover, by the construction of $\widetilde{B}$ it follows that $A$ is an ideal of this algebra. Finally, as $A$ is embedded into $\widetilde{B}$ by the inclusion $a\mapsto (0,a)$,  the induced coaction $h\cdot (0,a)$, defined as $h\cdot (0,a)=(0,\um)(h\rhd (0,a))$, is equal to 
$(0,\um (h\rhd a))=(0,h\cdot a)$. Hence, the inclusion map intertwins the partial actions. Note that $\widetilde{B}$ is a globalization, but not an admissible globalization of $A$ because $\widetilde{B}$ is not generated by $A$ as $H$-module.

\bigskip 

The partial version of the Blattner-Montgomery theorem intends to characterize the smash product 
$(\underline{A\# H})\# H^\ast$ and, in particular, the partial smash product $\underline{A\# H}$, as subalgebras of $A\otimes \End_\K (H)$. To this end, we shall use the classical result with a (unital) globalization $B$, which gives the isomorphism $(B\# H)\# H^\ast \simeq B\otimes \End_\K (H)$, and then find the suitable idempotents projecting  onto the subalgebras under investigation.

First, we shall state briefly the morphisms involved in the classical Blattner-Montgomery theorem \cite{BM}. 

\begin{lemma} \cite{CM,susan} Let $H$ be a finite dimensional Hopf algebra. Then, the linear maps
\begin{enumerate}
\item $\lambda :H\# H^\ast \rightarrow \End_\K (H)$, defined as, $\lambda (h\# f)(k)=h(f\rightharpoonup k)$,
\item $\rho :H^\ast \# H \rightarrow \End_\K (H)$, defined as, $\rho (f\# h)(k)=(k\leftharpoonup f)h$
\end{enumerate}
for every $h,k\in H$ and $f\in H^\ast$, are isomorphisms of algebras.
\end{lemma}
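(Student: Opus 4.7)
The plan is to prove, for $\lambda$ and then symmetrically for $\rho$, that it is a unital algebra homomorphism and a linear bijection. Since $\dim_\K(H \# H^\ast) = (\dim_\K H)^2 = \dim_\K \End_\K(H)$, bijectivity reduces to surjectivity.

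First I would verify that $\lambda$ is an algebra homomorphism. Using the smash-product multiplication $(h\#f)(k\#g) = \sum h(f_{(1)} \rightharpoonup k) \# f_{(2)}g$, a direct Sweedler-notation expansion gives
\[
\lambda\bigl((h\#f)(k\#g)\bigr)(t) = \sum h(f_{(1)} \rightharpoonup k)\bigl((f_{(2)}g) \rightharpoonup t\bigr) = h\bigl(f \rightharpoonup [k(g \rightharpoonup t)]\bigr) = \lambda(h\#f)\bigl(\lambda(k\#g)(t)\bigr),
\]
using that the hit action measures the product of $H$, namely $f \rightharpoonup (xy) = \sum (f_{(1)} \rightharpoonup x)(f_{(2)} \rightharpoonup y)$, together with the action property $f \rightharpoonup (g \rightharpoonup t) = (fg) \rightharpoonup t$. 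Unitality is automatic since $\epsilon \rightharpoonup k = k$.

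For surjectivity I would compute the centralizer of $\mathrm{Im}(\lambda)$ inside $\End_\K(H)$. The image contains all left multiplications $L_h = \lambda(h\#\epsilon)$ as well as all hit operators $\lambda(1\#f) = (f \rightharpoonup -)$. Any $d$ commuting with every $L_h$ must be right multiplication $R_{h_0}$ by some $h_0 \in H$, because the left-$H$-linear endomorphisms of the rank-one free module ${}_H H$ are precisely the right multiplications. Requiring $R_{h_0}$ to also commute with every $(f \rightharpoonup -)$, specialized at $k = 1_H$, forces $f \rightharpoonup h_0 = f(1_H)\,h_0$ for all $f \in H^\ast$, which is equivalent to $\Delta(h_0) = h_0 \otimes 1_H$; applying $\epsilon \otimes \mathrm{id}$ then yields $h_0 \in \K \cdot 1_H$. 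Hence the centralizer collapses to $\K \cdot \mathrm{id}_H$, and the double centralizer theorem applied to the central simple $\K$-algebra $\End_\K(H)$ gives $\mathrm{Im}(\lambda) = \End_\K(H)$.

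The argument for $\rho$ is strictly dual, with left and right interchanged: the image contains right multiplications $R_h$ and the right-hit operators $(-\leftharpoonup f)$, and the mirrored centralizer calculation again reduces to $\K \cdot \mathrm{id}_H$. The main obstacle I expect is the centralizer step, which is the only point where the Hopf-algebraic compatibility between the product and coproduct is genuinely needed to force $h_0 \in \K \cdot 1_H$; by contrast the verification of the algebra-homomorphism identities is routine Sweedler bookkeeping.
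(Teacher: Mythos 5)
Your verification that $\lambda$ is a unital algebra map is correct, and so are the dimension count and the computation of the centralizer of $\mathrm{Im}(\lambda)$: the commutant of the left multiplications $L_h$ consists of the right multiplications $R_{h_0}$, and compatibility with the operators $f\rightharpoonup(-)$ forces $\Delta(h_0)=h_0\otimes 1_H$, hence $h_0\in\K 1_H$. (The paper itself offers no proof to compare with; it quotes the lemma from Cohen--Montgomery and Montgomery's book.) The genuine gap is the final step: from $C_{\End_\K(H)}(\mathrm{Im}\,\lambda)=\K\,\mathrm{id}_H$ you cannot conclude $\mathrm{Im}\,\lambda=\End_\K(H)$ by the double centralizer theorem. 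That theorem (like the Jacobson density theorem) applies to a \emph{simple} subalgebra (or needs $H$ to be a semisimple module over the subalgebra); for an arbitrary subalgebra of a matrix algebra a trivial centralizer implies nothing of the sort -- the upper triangular matrices in $M_2(\K)$ have centralizer $\K\cdot\mathrm{id}$ but form a proper subalgebra. To make your strategy work you would have to show first that $H$ is a simple (or semisimple) module over $H\# H^\ast$, i.e.\ that $H$ has no nonzero proper left ideal stable under all operators $f\rightharpoonup(-)$; but that is essentially the substance of the lemma, and your argument does not address it.

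A standard way to close the gap is to prove \emph{injectivity} instead, which with your dimension count gives bijectivity. Identify $\End_\K(H)$ with $H\otimes H^\ast$ by letting $u=\sum_i h_i\otimes f_i$ correspond to $F_u(k)=\sum_i h_i f_i(k)$. Then $\lambda(u)(k)=\sum_i h_i k_{(1)} f_i(k_{(2)})=\sum_{(k)} F_u(k_{(2)})\,k_{(1)}$, so under this identification $\lambda$ becomes the linear map $F\mapsto\bigl(k\mapsto\sum F(k_{(2)})k_{(1)}\bigr)$ on $\End_\K(H)$, which is bijective with explicit inverse $G\mapsto\bigl(k\mapsto\sum G(k_{(2)})\,S^{-1}(k_{(1)})\bigr)$; here one uses $\sum S^{-1}(k_{(2)})k_{(1)}=\epsilon(k)1_H$ and the fact that the antipode of a finite-dimensional Hopf algebra is bijective. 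A symmetric computation (again via $S^{-1}$, acting on the other side) gives injectivity of $\rho$. Alternatively, you may keep your centralizer computation, but only after establishing simplicity of $H$ as an $H\#H^\ast$-module, which requires a separate argument.
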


Let $B$ be a left $H$-module algebra. Since $H$ is finite dimensional, $B$ is rational as a left $H$-module, hence $B$ is also a right $H^\ast$-comodule; in fact, it is easy to see that it is a comodule algebra. The comodule structure is given by 
\[
\delta(b) = \sum b^{(0)} \otimes b^{(1)} = \sum_i (h_i \rhd b) \otimes h_i^\ast
\]
where $\{ h_i \in H\, |\, 1\leq i\leq n\,\}$ is a basis of $H$ and  $\{ h^\ast_i \in H^\ast \, |\, 1\leq i\leq n\,\}$ is its correspondent dual basis.

Once the morphisms $\lambda$ and $\rho$ were stablished, one can define the maps
\begin{eqnarray}
\Phi: (B \# H) \# H^\ast & \longrightarrow & B \otimes \End_\K (H) \nonumber \\ 
b \# h \# f & \mapsto & \sum b^{(0)} \otimes \rho (\sd(b^{(1)})\# 1_H)\lambda (h \# f) \nonumber
\end{eqnarray}
and
\begin{eqnarray}
\Psi: B \otimes \End_\K (H) & \longrightarrow & (B \# H) \# H^\ast \nonumber \\ 
b \otimes T & \mapsto & \sum ((b^{(0)}\# 1_H\# \epsilon )(1_B\# \lambda^{-1}(\rho (b^{(1)}\# 1_H) T)) .\nonumber
\end{eqnarray}
Following the steps given in \cite{CM} one can show that $\Phi$ and $\Psi$ are mutually inverse algebra isomorphisms. Basically, this is a corollary of the Blattner-Montgomery theorem for the case of finite dimensional Hopf algebras. Note that the morphisms $\Phi$ and $\Psi$ originally presented in that paper are slightly different from our definition: our morphism $\Phi$ corresponds to their morphism $\Phi$ composed with $I\otimes \lambda$. 

The next step is to restrict the domain and codomain of the morphism $\Phi$ in order to get the correct subspaces corresponding to the partial action on $A$. First, $B \otimes \End_\K(H)$ can be projected onto $A \otimes \End_\K(H)$
by left (or right) multiplication by $\um \otimes I$, since $\um $ is a central idempotent in $B$. The domain of $\Phi$ can also be restricted to $(\underline{A \# H}) \# H^\ast$ and we get an algebra morphism $\widetilde{\Phi} = (\um \otimes I) \Phi : (\underline{A \# H}) \# H^\ast \vai A \otimes \End_\K(H)$. 

When we multiply by $\um \otimes I$, a non-trivial kernel may appear. One can calculate it in the following way: let $E = \Psi (\um \otimes I)$ and $F= \Psi ((1_B - \um )\otimes I)$. Then $E$ and $F$ are central orthogonal idempotents of $(B \# H) \# H^\ast$ such that $E + F = 1_B \# 1_H \# \epsilon $ (which is the unity of $(B \# H) \# H^\ast$). 

The unity of $(\underline{A \# H}) \# H^\ast$ is the element  $e = \um \# 1_H \# \epsilon $. It is easy to see that $e$ is an idempotent of $(B \# H) \# H^\ast $, and that 
 $(\underline{A \# H}) \# H^\ast = e ((B \# H) \# H^\ast ) e$. We also have
 \[
 e = ee = e (\e e +  \f e) = e \e e + e \f e  
 \]
 and, since $E,F$ are central orthogonal idempotents and $e$ is an idempotent, $e \e e $ and $e \f e$ are orthogonal idempotents of $(\underline{A \# H}) \# H^\ast$.  Therefore one has the decomposition 
\[(\underline{A \# H}) \# H^\ast = e \e e(\underline{A \# H}) \# H^\ast \oplus e \f e(\underline{A \# H}) \# H^\ast
\]
 as an algebra. Note also that since $e$ is an idempotent and $\e$ and $\f$ are central, we may write $e \e e = \e  e$ and $e \f e = \f e $.
 
Applying $\widetilde{\Phi}$ to $v \in (\underline{A \# H}) \# H^\ast$, we have 
\[
\widetilde{\Phi}(v) = (\um \otimes I)\Phi(v) = \Phi(\e)\Phi(\e ev +   \f e v) 
= \Phi(\e e v) = \Phi(\e)\Phi(\e e v) = \widetilde{\Phi}(\e e v).
\]
Hence $\widetilde{\Phi}(v) = 0$ iff $\Phi(v) = \Phi(\f ev)$ and, since $\Phi$ is an isomorphism, $v = \f e v$.  This shows that 
\[\ker \widetilde{\Phi} =  \f e(\underline{A \# H}) \# H^\ast . \]
By the same token, $\widetilde{\Phi}$ restricted to $\e e (\underline{A \# H}) \# H^\ast$ is a monomorphism. 

A natural question is about the necessary and sufficient conditions to have a nontrivial kernel, that is, when the subspace $F e (\underline{A \# H}) \# H^\ast \neq 0$. 

The vector space $B \otimes H$ is a free right $B$-module by $(b \otimes h) c := bc \otimes h$. With this structure on $B \otimes H$, it is well known that the linear map $\eta: B \otimes \End_\K(H) \vai \End_B(B \otimes H)$ defined by $\eta(b \otimes T) (c \otimes k) = bc \otimes T(h)$ is an isomorphism of algebras (and of left $B$-modules). In what follows, we will identify the algebras  $B \otimes \End_\K(H)$ and $ \End_B(B \otimes H)$, and will consider $b \otimes T$ as an endomorphism of the $B$-module $B \otimes H$.

Suppose that the kernel of $\widetilde{\Phi}$ is trivial: since $e$ is the unity of the subalgebra $(\underline{A \# H}) \# H^\ast$, this is equivalent to say that $Fe= (Fe)e=0$, and therefore its image under $\Phi$ vanishes identically as a linear transformation on $B\otimes H$. Let $\{ h_i \in H\, |\, 1\leq i\leq n\,\}$ be a basis of $H$ and  $\{ h^\ast_i \in H^\ast \, |\, 1\leq i\leq n\,\}$ its correspondent dual basis.
We can write explicitly the action of $\Phi(e)$ on $B \otimes H$ by 
\begin{eqnarray}
\Phi(e) (b \otimes k)  & = & 
\sum \um^{(0)} \otimes \rho(\sd(\um^{(1)}) \# 1_H) \lambda( 1_H \# \epsilon) (b \otimes k) \nonumber \\
& = & \sum_{i} (h_i \rhd \um)b \otimes \rho(\sd(h_i^\ast) \# 1_H) \lambda( 1_H \# \epsilon) ( k) \nonumber \\
& = & \sum_{i} (h_i \rhd \um)b \otimes \rho(\sd(h_i^\ast) \# 1_H) ( k) \nonumber \\
& = & \sum_{i} (h_i \rhd \um)b \otimes \sum \langle k_{(1)}, \sd(h_i^\ast) \rangle k_{(2)} \nonumber \\
& = & \sum_{i} ( \sum \langle k_{(1)}, \sd(h_i^\ast) \rangle h_i \rhd \um)b \otimes  k_{(2)} \nonumber \\
& = & \sum (S^{-1}(k_{(1)}) \rhd \um)b \otimes  k_{(2)} .\nonumber 
\end{eqnarray}
If $\Phi (Fe)(b\otimes k)=0$ for every $b\in B$ and $k\in H$, then
\[
(1_B -\um )(\sum (S^{-1}(k_{(1)}) \rhd \um)b \otimes  k_{(2)})=
\sum ((S^{-1}(k_{(1)}) \rhd \um ) -(S^{-1}(k_{(1)}) \cdot \um ))b \otimes  k_{(2)}=0.
\]
Taking $b=1_B$, we have
\[
\sum ((S^{-1}(k_{(1)}) \rhd \um ) -(S^{-1}(k_{(1)}) \cdot \um )) \otimes  k_{(2)}=0.
\]
Finally, aplying $(I\otimes \epsilon)$ to this previous equality, we obtain
\[
S^{-1}(k)\rhd \um =S^{-1}(k) \cdot \um , \ \ \mbox{ for every } k \in H,
\]
and since the antipode is bijective, this implies that 
\begin{equation}\label{trivial}
h\rhd \um = h\cdot \um \ \ \mbox{for every }h\in H.
\end{equation}
This is sufficient to assure that the partial and the global action coincide, i.e, that $A$ is an $H$-\emph{module} algebra. In fact, for $h\in H$ and $a\in A$:
\begin{eqnarray}
h\rhd a &=& h\rhd (\um a)=\nonumber\\
&=& \sum (h_{(1)}\rhd \um )(h_{(2)}\rhd a)= \nonumber\\
&=& \sum (h_{(1)}\cdot \um )(h_{(2)}\rhd a)=\nonumber\\
&=& \sum (h_{(1)}\cdot \um ) \um (h_{(2)}\rhd a)=\nonumber\\
&=& \sum (h_{(1)}\cdot \um )(h_{(2)}\cdot a)=\nonumber\\
&=& h\cdot (\um a) = h\cdot a. \nonumber
\end{eqnarray}
One can also prove that $A$ is an $H$-module algebra using equation (\ref{trivial}) in item (3) of the definition of partial action.  Therefore, the kernel will be trivial if, and only if, the partial action of $H$ on $A$ is a total action. Summing up our findings, we have the following version of the Blattner-Montgomery theorem. 
\begin{thm}
Let $H$ be a finite dimensional Hopf algebra, let $A$ be a partial $H$-module algebra and $(B,\varphi)$ a enveloping action of $A$, where $B$ is a unital algebra. Identifying $A$ with $\varphi(A)$, let $\Phi: (B \# H) \# H^\ast \rightarrow  B \otimes \End_\K (H)  $ be the isomorphism of the Blattner-Montgomery theorem, $\Psi = \Phi^{-1}$, and consider the elements $\e = \Psi(\um \otimes I)$, $\f = \Psi((1_B - \um) \otimes I))$. 
\begin{enumerate}[\bf 1)]
\item The algebra homomorphism $\widetilde{\Phi} = (\um \otimes I) \Phi$ is given by 
\begin{eqnarray}
\widetilde{\Phi}: (\underline{A \# H}) \# H^\ast & \longrightarrow & A \otimes \End_\K (H) \nonumber \\ 
\sum_{(k)} a (k_{(1)} \cdot \um) \# k_{(2)} \# f & \mapsto & \sum_i \sum_{(k)} h_i \cdot a (k_{(1)} \cdot \um) \otimes \rho (\sd(h_i^\ast)\# 1_H)\lambda (k_{(2)} \# f) \nonumber
\end{eqnarray}
\item If $e $ is the unity of $\underline{A \# H}$, then  $e\e e$ and $e \f e$ are orthogonal idempotents of $(\underline{A \# H}) \# H^\ast$, and 
\[(\underline{A \# H}) \# H^\ast = e \e e(\underline{A \# H}) \# H^\ast \oplus e \f e(\underline{A \# H}) \# H^\ast
\]
is a decomposition of the algebra $(\underline{A \# H}) \# H^\ast$ as a direct sum of ideals.
\item $\ker \widetilde{\Phi} =e \f e(\underline{A \# H}) \# H^\ast $, and this kernel is trivial if and only if $A$ is an $H$-module subalgebra of $B$; more precisely, $h \cdot a = h \rhd a$ for all $a \in A$ and all $h \in H$. 
\end{enumerate}
\end{thm}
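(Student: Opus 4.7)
The plan is to extract Parts (1)--(3) of the statement from the analysis preceding it, treating each part with its distinct technical ingredient: an explicit formula computation, an idempotent decomposition, and a kernel identification plus triviality criterion.

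For Part (1), I would start by recalling that $(\underline{A \# H}) \# H^\ast$ sits inside $(B \# H) \# H^\ast$ as $e\bigl((B \# H)\# H^\ast\bigr)e$ with $e = \um \# 1_H \# \epsilon$, and that a typical element of $\underline{A \# H}$ has the form $\sum_{(k)} a(k_{(1)} \cdot \um) \# k_{(2)}$ with $a \in A$. I would then substitute $b = a(k_{(1)} \cdot \um)$ and $h = k_{(2)}$ into the classical Blattner--Montgomery formula for $\Phi$ and multiply by $\um \otimes I$ on the left. The only nontrivial point is to identify $(\um \otimes I)\delta(b) = \sum_i \um(h_i \rhd b) \otimes h_i^\ast = \sum_i (h_i \cdot b) \otimes h_i^\ast$ for $b \in A$; this is immediate from the very definition $h \cdot b = \um(h \rhd b)$ of the induced partial action. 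Reading off the resulting expression gives the displayed formula for $\widetilde{\Phi}$.

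For Part (2), the elements $\e = \Psi(\um \otimes I)$ and $\f = \Psi((1_B - \um)\otimes I)$ are central and orthogonal in $(B \# H)\# H^\ast$ with $\e + \f = 1_B \# 1_H \# \epsilon$, simply because $\um \otimes I$ and $(1_B - \um)\otimes I$ are central orthogonal idempotents of $B \otimes \End_\K(H)$ summing to the identity ($\um$ is a central idempotent of $B$). Writing $e = e \cdot 1 \cdot e = e(\e + \f)e = e\e e + e\f e$ and using centrality of $\e,\f$, one sees that $e\e e = \e e$ and $e\f e = \f e$ are orthogonal idempotents, and that they remain central in the corner algebra $e((B\# H)\# H^\ast)e = (\underline{A\# H})\# H^\ast$. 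This yields the claimed direct sum of two-sided ideals.

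For Part (3), the kernel follows from the chain $\widetilde{\Phi}(v) = (\um \otimes I)\Phi(v) = \Phi(\e)\Phi(v) = \Phi(\e v)$; for $v = ev$ in the subalgebra this equals $\Phi(e\e e v)$, so bijectivity of $\Phi$ gives $\ker\widetilde{\Phi} = e\f e (\underline{A \# H})\# H^\ast$ via the decomposition of Part (2). The harder half is the triviality criterion. Here I would pass through the identification $B \otimes \End_\K(H) \cong \End_B(B \otimes H)$ and compute $\Phi(e)$ as a $B$-linear operator using a dual basis $\{h_i\}, \{h_i^\ast\}$; a short calculation yields
\[
\Phi(e)(b \otimes k) = \sum \bigl(S^{-1}(k_{(1)}) \rhd \um\bigr)\, b \otimes k_{(2)}.
\]
Triviality of the kernel is equivalent to $\Phi(\f e) = (1_B - \um)\Phi(e) = 0$; specializing to $b = 1_B$ and applying $I \otimes \epsilon$ gives $S^{-1}(k)\rhd \um = S^{-1}(k)\cdot \um$ for every $k \in H$, which by bijectivity of $S$ is equivalent to $h \rhd \um = h \cdot \um$ for all $h \in H$. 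A final short calculation exploiting $a = \um a$, the comultiplicativity of the global action, and the partial action axioms promotes this pointwise identity at $\um$ to the equality $h \rhd a = h \cdot a$ for all $a \in A$, so $A$ becomes an $H$-submodule algebra of $B$. The main obstacle is the explicit formula for $\Phi(e)$ on $B \otimes H$; once that is in hand, the rest is bookkeeping with central idempotents and one application of the antipode being bijective.
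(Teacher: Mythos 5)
Your proposal is correct and follows essentially the same route as the paper: restriction of the classical Blattner--Montgomery isomorphism and left multiplication by $\um \otimes I$ for the formula in (1), the central orthogonal idempotents $\e=\Psi(\um\otimes I)$, $\f=\Psi((1_B-\um)\otimes I)$ cut down by $e=\um\#1_H\#\epsilon$ for the decomposition in (2), and for (3) the same kernel chain together with the explicit computation of $\Phi(e)$ on $B\otimes H$, the specialization $b=1_B$ followed by $I\otimes\epsilon$, bijectivity of the antipode to get $h\rhd\um=h\cdot\um$, and the standard promotion of this identity to $h\rhd a=h\cdot a$ for all $a\in A$. No substantive differences from the paper's argument.
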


We now particularize the discussion to group algebras. The previous result allows us to reobtain some of the main results of \cite{lomp}, and its proof provides still another characterization of the algebra $(A\rtimes G) \# \K G^\ast$. In what follows we will use $A \# \K G$ instead of $A\rtimes G$; we also suppose that the partial action is such that each idempotent $g \cdot \um$ is central, as it happens when the  partial $\K G$-action is induced by a partial action of the group $G$ \cite{caen06,paper}.

 When $H$ is the group algebra $\K G$ of a finite group $G$ of order $|G|=n$, acting on an algebra $B$, the classical Cohen-Montgomery theorem \cite{CM} says that 
\[
\begin{array}{rcl} 
\widehat{\Phi}: (B \# \K G) \# \K G^* & \longrightarrow & M_n(B) \\
\sum_{g,h} (b_{g,h} \# g) \#p_h & \mapsto & \sum_{g,h} ((gh)^{-1} \acts b_{g,h}) E_{gh,h}
\end{array}
\]
is a $\K$-algebra isomorphism. This map may be obtained as the composition of the map $\Phi$ of the Blattner-Montgomery with the canonical isomorphism of $B \otimes \End_\K(H)$ and $M_n(B)$ given by $\sum_{g,h} b_{g,h} \otimes e_{g,h} \mapsto \sum_{g,h} b_{g,h} E_{g,h}$, where $e_{g,h}$  is the map that takes $g$ to $h$ and kills every other basis element, and $E_{g,h}$ is the associated matrix.
 
In the case of partial group actions, given an element $\sum_{g,h} a_{g,h}(g \cdot \um) \# g \# p_h \in (\underline{A \# \K G}) \# \K G^*$ we have,  
\begin{eqnarray}
\widetilde{\Phi}\left(\sum_{g,h}a_{g,h}(g \cdot \um) \#  g \# p_h\right) & = & 
(\um I )\cdot  \widehat{\Phi} \left( \sum_{g,h}a_{g,h}(g \cdot \um) \# g \# p_h \right) \nonumber \\
& = & \sum_{g,h}\um \left((gh)^{-1} \rhd (a_{g,h}(g \cdot \um))\right) E_{gh,h} \nonumber \\
& = & \sum_{g,h}(gh)^{-1} \cdot (a_{g,h}(g \cdot \um)) E_{gh,h} \nonumber \\
& = & \sum_{g,h}(h^{-1}g^{-1} \cdot (a_{g,h}))(h^{-1}g^{-1}  \cdot (g \cdot \um)) E_{gh,h} \nonumber \\
& = & \sum_{g,h}(h^{-1}g^{-1} \cdot (a_{g,h})) (h^{-1}g^{-1}\cdot \um))(h^{-1}g^{-1}g   \cdot \um) E_{gh,h} \nonumber \\
& = & \sum_{g,h}(h^{-1}g^{-1} \cdot (a_{g,h})) (h^{-1}  \cdot \um) E_{gh,h} \nonumber \\
& = & \sum_{g,h}(h^{-1} \cdot (g^{-1} \cdot (a_{g,h}))) E_{gh,h} \nonumber 
\end{eqnarray}

On the other hand, we can obtain the idempotent $E=\widehat{\Phi}^{-1} (\um I)$, resulting in $E = \sum_k (k \rhd \um) \# 1_H \# p_k$. For the same element $\sum_{g,h} a_{g,h}(g \cdot \um) \# g \# p_h \in (\underline{A \# \K G}) \# \K G^*$ we have in turn,  
\begin{eqnarray}
eE e \sum_{g,h} a_{g,h}(g \cdot \um) \# g \# p_h & = & (\sum_k (k \cdot \um) \# 1_H \# p_k) 
(\sum_{g,h} a_{g,h}(g \cdot \um) \# g \# p_h) \nonumber \\
& = & \sum_{k,g,h,s} ((k \cdot \um) \# 1_H)p_{s^{-1}}\rhd(a_{g,h}(g \cdot \um) \# g) \# p_{sk} p_h \nonumber \\
& = & \sum_{k,g,h,s} ((k \cdot \um) \# 1_H)(a_{g,h}(g \cdot \um) \# g p_{s^{-1}}(g))  \# p_{sk} p_h \nonumber \\
& = & \sum_{k,g,h} ((k \cdot \um) \# 1_H)(a_{g,h}(g \cdot \um) \# g) \# p_{g^{-1}k} p_h \nonumber \\
& = & \sum_{g,h} ((gh \cdot \um) \# 1_H)(a_{g,h}(g \cdot \um) \# g) \#  p_h \nonumber \\
& = & \sum_{g,h} a_{g,h}(g \cdot \um)(gh \cdot \um) \# g \#  p_h \nonumber 
\end{eqnarray}
If we apply directly $\widehat{\Phi}$ on this element, we get
\begin{eqnarray}
\widehat{\Phi}(eE e \sum_{g,h} a_{g,h}(g \cdot \um) \# g \# p_h) &=& \widehat{\Phi}(\sum_{g,h} a_{g,h}(g \cdot \um)(gh \cdot \um) \# g \#  p_h)=\nonumber\\
&=& \sum_{g,h}(h^{-1} \cdot (g^{-1} \cdot (a_{g,h}))) E_{gh,h} .\nonumber
\end{eqnarray}
In this case,  $(\underline{A \# \K G} \# \K G^\ast)$ decomposes as the direct sum of the ideals
\[
e \e e (\underline{A \# \K G} \# \K G^\ast) = 
\{\sum_{g,h} a_{g,h} (gh \cdot \um)(g \cdot \um) \# g \#  p_h \mid a_{g,h} \in A \}
\]
and 
\[
\ker \widetilde{\Phi} = e \f e (\underline{A \# \K G} \# \K G^\ast) =  
\{\sum_{g,h} a_{g,h}(\um - (gh \cdot \um))(g \cdot \um) \# g \#  p_h \mid a_{g,h} \in A \}.
\]

Besides these versions of the duality theorems of Blattner-Montgomery and Cohen-Montgomery, there are also  ``extrinsic'' versions: the algebra $\underline{A \# H} \# H^*$ corresponds to the endormorphism ring of a module over the \emph{enveloping $H$-module algebra} $B$ (and not over $A$), as we see in the following.

\bigskip

Let $B$ be a $\K$-algebra and $M$ a right $B$-module that decomposes as a finite direct sum $M = \oplus_{i=1}^n M_i$. We can identify the $\K$-vector space $\Hom_B(M_i,M_j)$ of morphisms of right $B$-modules $g: M_i \rightarrow M_j$ with the subspace of $\End_B(M)$ of the morphisms that take $M_i$ into $M_j$ and kill every other summand $M_k$, $k \neq i$. This comes from the induced decomposition in $\End_B(M)$:  if $\iota_j:M_j \vai \oplus_{i=1}^n  M_i$ and $p_j : \oplus_{i=1}^n  M_i \vai M_j$ are respectively the canonical injection and the canonical projection,  then 
\begin{eqnarray}\label{decomposicao}
\End_B(M) \simeq \bigoplus_{i,j} p_i \End_B(M) \iota_j \simeq \bigoplus_{i,j} \Hom_B(M_j,M_i)
\end{eqnarray}
as a $\K$-vector spaces. 

It is known that
\[ L(M) = \{(f_{ij}) \in M_n(\End_B(M)); f_{ij} \in \Hom_B(M_i,M_j); 1 \leq i,j \leq n \}\]
is a subalgebra of $M_n(\End_B(M))$, and that the vector space isomorphisms \linebreak $p_i \End_B(M) \iota_j \simeq \Hom_B(M_j,M_i)$ induce an algebra isomorphism $\End_B(M)\simeq L(M)$.


Consider now a partial $H$-module algebra $A$ and a  unital enveloping $H$-module algebra $B$. The elements $e = 1_A \# 1_H \# \epsilon$ and $f = (1_B - 1_A) \# 1_H \# \epsilon$ form a complete set of orthogonal idempotents for $(B \# H) \# H^*$, and so do $\Phi(e)$ and $\Phi(f)$ in $\End_B(B \otimes H)$. Hence, equation   (\ref{decomposicao}) implies that  $\Phi(e) \End_B(B \otimes H) \Phi(e)$ is a direct summand of $\End_B(B \otimes H)$, and 
\[
\underline{A \#H} \#H^\ast = e (B \# H) \#H^\ast e \simeq \Phi(e) \Phi((B \# H) \#H^\ast ) \Phi(e)\]
\[ \simeq \Phi(e) \End_B(B \otimes H) \Phi(e) \simeq \End_B (\Phi(e) (B \otimes H))
\]
as algebras. We have thus another version of the Blattner-Montgomery theorem for partial Hopf actions.

\begin{thm}\label{BM2}
Let $H$ be a finite-dimensional Hopf algebra over $\K$, $A$ a partial $H$-module algebra, $B$ a unital enveloping $H$-module algebra of $A$, $e = 1_A \# 1_H \# \epsilon$ and $\Phi: (B \# H) \#H^* \vai \End_B(B \otimes H)$ as before. Then 
\[
\underline{A \# H} \# H^* \simeq \End_B(\Phi(e)(B \otimes H))
\]
as $\K$-algebras.
\end{thm}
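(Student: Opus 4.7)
The plan is to leverage the classical Blattner-Montgomery isomorphism $\Phi\colon (B\#H)\#H^* \to \End_B(B\otimes H)$ that has already been set up above, together with the observation that the partial smash product sits inside the global one as a corner ring cut out by the idempotent $e = 1_A \# 1_H \# \epsilon$. Concretely, because $1_A$ is a central idempotent of $B$, $e$ is an idempotent of $(B\#H)\#H^*$, and the multiplication formula in the smash product yields the identification
\[
\underline{A\#H}\#H^* \;=\; e\bigl((B\#H)\#H^*\bigr)\,e
\]
as $\K$-algebras (this is the standard description of $\underline{A\#H}$ as the unital ideal of $B\#H$ generated by $1_A\#1_H$, tensored with $H^*$). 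Transporting this identification across the algebra isomorphism $\Phi$ immediately gives
\[
\underline{A\#H}\#H^* \;\simeq\; \Phi(e)\,\End_B(B\otimes H)\,\Phi(e).
\]

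To finish, I would invoke the elementary ``corner ring vs. endomorphism ring of the image'' lemma recalled just before the theorem: for any right $B$-module $M$ and any idempotent $p \in \End_B(M)$, there is a natural algebra isomorphism
\[
p\,\End_B(M)\,p \;\simeq\; \End_B\bigl(p(M)\bigr),
\]
sending $p\circ f\circ p$ to its restriction to the direct summand $p(M)=\mathrm{Im}(p)$, with inverse obtained by extending any $g\in\End_B(p(M))$ by zero on $(1-p)(M)$. Applying this with $M=B\otimes H$ and $p=\Phi(e)$ yields
\[
\Phi(e)\,\End_B(B\otimes H)\,\Phi(e) \;\simeq\; \End_B\bigl(\Phi(e)(B\otimes H)\bigr),
\]
and composing the two displayed isomorphisms produces the statement of the theorem.

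The only points that require a moment's verification are that $e$ really is idempotent in the ambient smash (which uses only that $1_A$ is a central idempotent of $B$ and that $1_H\#\epsilon$ is the unity of $H\#H^*$), and that $\underline{A\#H}\#H^*$ genuinely coincides with $e\bigl((B\#H)\#H^*\bigr)e$; both are routine consequences of the smash product multiplication formulas. No real obstacle remains: this is essentially a bookkeeping argument that transports the classical duality across the corner-ring decomposition, with all of the substantive analytical content already bundled inside the Blattner-Montgomery isomorphism $\Phi$ and the vector-space decomposition (\ref{decomposicao}).
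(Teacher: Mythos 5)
Your proposal is correct and follows essentially the same route as the paper: identify $\underline{A\#H}\#H^*$ with the corner $e\bigl((B\#H)\#H^*\bigr)e$, transport it through the Blattner--Montgomery isomorphism $\Phi$, and then identify $\Phi(e)\,\End_B(B\otimes H)\,\Phi(e)$ with $\End_B\bigl(\Phi(e)(B\otimes H)\bigr)$. The only cosmetic difference is that you invoke the corner-ring lemma $p\,\End_B(M)\,p\simeq\End_B(p(M))$ directly, whereas the paper phrases the same identification via its decomposition (\ref{decomposicao}) using the complete set of orthogonal idempotents $\Phi(e)$, $\Phi(f)$.
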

In this generality, we can't say much about the $B$-module $M = \Phi(e)(B \otimes H)$: from the expression of $\Phi(e)$ calculated before, we know that $M$ is generated, as a $\K$-vector space, by elements of the form 
\begin{eqnarray}
\sum (S^{-1}(k_{(1)}) \rhd \um)b \otimes  k_{(2)}, \ \ \mbox{ where } k \in H, b \in B.
\end{eqnarray}

In the case of partial group actions we can say a lot more. In what follows, $B$ is a unital enveloping $\K G$-module algebra for a partial $\K G$-module algebra $A$, where every idempotent $\id_g = g \rhd \um$ is central. Letting $\widehat{D}_{g}$ stand for the ideal $\D_g = B \id_g$, it follows at once that  
\begin{eqnarray}
M = \bigoplus_g (\widehat{D}_{g^{-1}} \otimes  g) \simeq \bigoplus_g \widehat{D}_{g} 
\end{eqnarray}
as left $B$-modules, and therefore 
\begin{eqnarray}\label{primeiroisoKG}
\underline{A \# \K G} \# \K G^\ast \simeq \End_B(\bigoplus_g \widehat{D}_{g})
\end{eqnarray}
as algebras. This last isomorphism yields a nice description of $\underline{A \# \K G} \# \K G^\ast$ as a matrix algebra, as we see next.

\bigskip

We have used above the algebra isomorphism $\End_B(M) \simeq L(M)$ when $M = \oplus_{i=1}^n M_i$. There is a variation on this theme that will be needed: when each $M_i$ is an  \emph{ideal} of $B$, it is easy to see that 
\[
S(M) = \{(a_{i,j}) \in M_n(B); a_{i,j} \in M_i M_j\}
\]
is a (possibly non-unital) subalgebra of $M_n(B)$; if each $M_k$ is a unital ideal with unity $\id_k$, then $S(M)$ is a unital algebra with unity $\sum_{k=1}^n \id_k E_{k,k}$.

Consider now the module $M= \oplus_g \D_g$. The ideals $ \D_g = B \id_g$  satisfy 
\[\D_g\D_h = \D_h\D_g = \D_g \cap \D_h = B \id_g \id_h.\]
Note also that the ideals $D_g$ of the partial action are given by $D_g = A \D_g = \D_\id \D_g$ (where $\id$ stands for the identity of $G$). Let $e = \um \# \id \# \epsilon = \sum_g \um \#\id \# p_g$ be the unity of $\underline{A \# \K G} \# \K G^\ast$. From the Cohen-Montgomery theorem, it follows that  \[
\underline{A \# \K G} \# \K G^\ast  = e ((B \# \K G) \# \K G^\ast) e \simeq \widehat{\Phi}(e) M_n(B) \widehat{\Phi}(e)\]
and a generic element of the image is 
\begin{eqnarray}
\widehat{\Phi}(e) \sum_{g,h} b_{g,h} E_{g,h} \widehat{\Phi}(e) & = & \sum_{g,h,k,s} \id_{k^{-1}}\id_{s^{-1}} b_{g,h} E_{k,k} E_{g,h} E_{s,s}
\nonumber \\
& = &   \sum_{g,h} \id_{h^{-1}} \id_{g^{-1}} b_{g,h}  E_{g,h} \nonumber 
\end{eqnarray}
Hence, the algebra  $\underline{A \# \K G} \# \K G^\ast $ is isomorphic to the matrix algebra
\[
S(M) = \{ (a_{g,h}) \in M_n(B) | a_{g,h} \in \D_{g^{-1}} \D_{h^{-1}}\}.
\]
 Putting this isomorphism together with the isomorphism $L(M) \simeq \End_B(M)$ and the ones presented in Theorem \ref{BM2} and  equation (\ref{primeiroisoKG}), we have the following result: 

\begin{thm}
Let $G$ be a finite group, $A$ a partial $\K G$-module algebra, $B$ a unital enveloping $\K G$-module algebra of $A$. Identifying $A$ with $\varphi(A)$, let  $\id_g = g \rhd \um$, consider the ideals $\D_g = B \id_g$ as right $B$-modules and let $M = \underset{g \in G}{\bigoplus} \D_{g} $.  Then   
\[
 \underline{A \# \K G} \# \K G^\ast \simeq S(M)\simeq L(M)  \simeq \End_B (M)
\] 
as $\K$-algebras. 
\end{thm}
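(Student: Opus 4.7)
My plan is to assemble the three stated isomorphisms as a chain
$\underline{A\#\K G}\#\K G^\ast \simeq \End_B(M) \simeq L(M) \simeq S(M)$,
in each case relying on an ingredient already developed in the section: Theorem \ref{BM2}, the standard matrix description of the endomorphism ring of a finite direct sum, and the Cohen--Montgomery computation of $\widehat{\Phi}(e)M_n(B)\widehat{\Phi}(e)$.

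First I would specialize Theorem \ref{BM2} to $H=\K G$, yielding $\underline{A\#\K G}\#\K G^\ast \simeq \End_B(\Phi(e)(B\otimes \K G))$. The formula for $\Phi(e)$ derived immediately before the statement specializes for $k=g\in G$ to $\Phi(e)(b\otimes g)=(g^{-1}\rhd \um)b\otimes g=\id_{g^{-1}}b\otimes g$, so $\Phi(e)(B\otimes \K G)=\bigoplus_g (\D_{g^{-1}}\otimes g)$. Relabeling $g \mapsto g^{-1}$ gives a right $B$-module isomorphism with $M=\bigoplus_g \D_g$, and hence an algebra isomorphism $\underline{A\#\K G}\#\K G^\ast \simeq \End_B(M)$.

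For $\End_B(M)\simeq L(M)$ I would invoke the standard matrix representation from equation (\ref{decomposicao}): using the canonical injections $\iota_g$ and projections $p_g$, the map $f\mapsto (p_g f\iota_h)_{g,h}$ is an algebra isomorphism because $\sum_k \iota_k p_k=\mbox{Id}_M$ turns composition into matrix multiplication. To identify $L(M)$ with $S(M)$ I would use the fact that each $\D_h=B\id_h$ is a cyclic right $B$-module generated by the central idempotent $\id_h$; evaluation at $\id_h$ gives a $\K$-linear bijection $\Hom_B(\D_h,\D_g)\to \D_g \cap \D_h=\D_g\D_h$ with inverse $x\mapsto (b\mapsto xb)$. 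These bijections assemble into an algebra isomorphism $L(M)\to S(M)$: for $f_{ij}\in\Hom_B(\D_j,\D_i)$ and $g_{jk}\in\Hom_B(\D_k,\D_j)$ one has $(f_{ij}\circ g_{jk})(\id_k)=f_{ij}(g_{jk}(\id_k))=f_{ij}(\id_j)\,g_{jk}(\id_k)$ using $B$-linearity and $g_{jk}(\id_k)\in\D_j$, and this is exactly matrix multiplication in $S(M)$.

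The Cohen--Montgomery calculation performed above already shows directly that $\widehat{\Phi}(e)M_n(B)\widehat{\Phi}(e)=S(M)$ (modulo the index relabeling $g\leftrightarrow g^{-1}$), giving an independent route to $\underline{A\#\K G}\#\K G^\ast\simeq S(M)$ and thus cross-checking the chain. The main delicacy is administrative: keeping the indexing consistent across the relabeling that absorbs the $g^{-1}$ in $\Phi(e)(b\otimes g)$, and verifying that each bijection is an algebra (not merely vector-space) homomorphism --- the one nontrivial point being the last step, which relies on the centrality of the $\id_g$.
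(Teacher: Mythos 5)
Your proposal is correct and follows essentially the same route as the paper: Theorem \ref{BM2} together with the computation $\Phi(e)(b\otimes g)=\id_{g^{-1}}b\otimes g$ gives $\underline{A\#\K G}\#\K G^\ast\simeq \End_B(M)$, the decomposition (\ref{decomposicao}) gives $\End_B(M)\simeq L(M)$, and the evaluation-at-$\id_g$ isomorphism $L(M)\simeq S(M)$ is exactly the paper's remark, while your Cohen--Montgomery cross-check is the paper's own primary route to $S(M)$. No gaps; the only difference is which of the two routes to $S(M)$ you treat as primary.
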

We remark that the middle isomorphism can be obtained directly: since each $\D_g$ is a cyclic right $B$-module (generated by $\id_g$), there is an  isomorphism of $\K$-vector spaces $\Hom_B(\D_g, \D_h ) \simeq \D_g  \D_h$ given by $f \mapsto f(\id_g)$, which induces an algebra isomorphism from $S(M)$ to $L(M)$.


\begin{thebibliography}{99}
\bibitem{paper}
Marcelo Muniz S. Alves, and Eliezer Batista: ``Enveloping Actions for Partial Hopf Actions'', arXiv:mathRA/0805.4805. To appear in Communications in Algebra. 

\bibitem{galois}
Marcelo Muniz S. Alves, and Eliezer Batista: ``Partial Hopf actions, partial invariants and a Morita context'', Algebra and Discrete Mathematics, v.3 (2009) 1-19 .

\bibitem{BM}
R.J. Blattner, S. Montgomery: ``A duality theorem for Hopf module algebras'', J. Algebra 95 (1985), 153-172.

\bibitem{caen06} 
S. Caenepeel, K. Janssen: ``Partial (Co)Actions of Hopf Algebras and Partial Hopf-Galois Theory'', Communications in Algebra 36:8 (2008) 2923-2946.


\bibitem{CM}
M. Cohen, S. Montgomery: ``Group-graded rings, smash products, and group actions'', Trans. AMS 282 (1984) 237-258.

\bibitem{romenos}
S. D\u{a}sc\u{a}lescu, C. N\u{a}st\u{a}sescu, \c{S}. Raianu: ``Hopf Algebras:
An Introduction'', Marcel Dekker Inc.(2001).

\bibitem{dok}
M. Dokuchaev, R. Exel: ``Associativity of Crossed Products by Partial Actions,
Enveloping Actions and Partial Representations'' Trans. Amer. Math. Soc. 357
(5) (2005) 1931-1952.

\bibitem{paques}
M. Dokuchaev, M. Ferrero, Ant\^onio Paques: ``Partial Actions and Galois
Theory'', J. Pure and Appl. Algebra 208 (1) (2007) 77-87.

\bibitem{ruy}
R. Exel: ``Circle Actions on $C^*$-Algebras, Partial Automorphisms and 
Generalized Pimsner-Voiculescu Exect Sequences'', J. Funct. Anal.122 (3)
(1994), 361-401.

\bibitem{lomp}
Christian Lomp: ``Duality for Partial Group Actions'', Int. Electron. J.
Algebra Vol 4 (2008) 53-62.

\bibitem{susan}
S. Montgomery: ``Hopf Algebras and Their Actions on Rings'', Amer. Math. Soc. (1993).

\bibitem{NT}
Y. Nakagami, M. Takesaki: ``Duality for crossed products of Von Neumann algebras'', Lecture Notes in Math. 731, Springer Verlag (1979).


\end{thebibliography}
\end{document}